\author{Beno\^it F. Sehba}
\title[Derivatives characterization of Bergman-Orlicz spaces]{Derivatives characterization of Bergman-Orlicz spaces and applications}
\newtheorem{theorem}{T{\hskip 0pt\footnotesize\bf HEOREM}}[section]
\newtheorem{lemma}[theorem]{L{\hskip 0pt\footnotesize\bf EMMA}}
\newtheorem{proposition}[theorem]{P{\hskip 0pt\footnotesize\bf ROPOSITION}}
\newtheorem{definition}[theorem]{D{\hskip 0pt\footnotesize\bf EFINITION}}
\newtheorem{corollary}[theorem]{C{\hskip 0pt\footnotesize\bf OROLLARY}}
\newcommand{\bprop} {\begin{proposition}}
\newcommand{\eprop} {\end{proposition}}
\newcommand{\btheo} {\begin{theorem}}
\newcommand{\etheo} {\end{theorem}}
\newcommand{\blem} {\begin{lemma}}
\newcommand{\elem} {\end{lemma}}
\newcommand{\bcor} {\begin{corollary}}
\newcommand{\ecor} {\end{corollary}}
\newcommand{\Be}{\begin{equation}}
\newcommand{\Ee}{\end{equation}}
\newcommand{\Bea}{\begin{eqnarray}}
\newcommand{\Eea}{\end{eqnarray}}
\newcommand{\Bes}{\begin{equation*}}
\newcommand{\Ees}{\end{equation*}}
\newcommand{\Beas}{\begin{eqnarray*}}
\newcommand{\Eeas}{\end{eqnarray*}}
\newcommand{\Ba}{\begin{array}}
\newcommand{\Ea}{\end{array}}
\def\C{\mathbb{C}}
\begin{document}
\date{\today}
%\address{Stephane Charpentier, .}
%\email{Stephane.Charpentier@math.u.psud.fr}
\address{Beno\^it Sehba, Department of Mathematics, University of Ghana, Legon\\ P. O. Box LG 62 Legon-Accra, Ghana.}
\email{bfsehba@ug.edu.gh}
%\email{bsehba@gmail.com}
%\address{Edgar Tchoundja, Department of Mathematics, University of Yaound\'e I, P. O. Box 812  Yaound\'e, Cameroon.}
%\email{etchound@ictp.it}
%\email{tchoundjaedgar@yahoo.fr}
\keywords{Bergman-Orlicz spaces, Ces\`aro operators.}
\subjclass[2000]{Primary 47B35, Secondary 32A35, 32A37}

\begin{abstract} It is well known that a function is in a Bergman space of the unit ball if and only if it satisfies some Hardy-type inequalities. We extend this fact to Bergman-Orlicz spaces. As applications, we obtain Gustavsson-Peetre interpolation of two Bergman-Orlicz spaces and we completely characterize symbols  of bounded or compact Ces\`aro-type operators on Bergman-Orlicz spaces, extending known results for classical weighted Bergman spaces.
\end{abstract}
\maketitle

\section{Introduction}

% We denote by $d\nu$ the Lebesgue measure on $\mathbb B^n$ the unit ball  of $\mathbb C^n$, and $d\sigma$  $the normalized measure on $\mathbb S^n=\partial{\mathbb B^n}$ the boundary of
%$\mathbb B^n$. 
%The space $\mathcal H(\mathbb B^n)$ is the
%set of holomorphic functions on $\mathbb B^n.$

Let $z=(z_1,\cdots,z_n)$ and $w=(w_1,\cdots,w_n)$ be vectors in
$\C^n$. We write
$$\langle z,w\rangle =z_1\overline {w_1} +
\cdots + z_n\overline {w_n}$$
and $|z|^2=\langle
z,z\rangle =|z_1|^2 +\cdots +|z_n|^2$.

 Given a function $\Phi:[0,\infty)\rightarrow [0,\infty)$, We say  $\Phi$ is a growth function if it is a continuous and non-decreasing function.

 We denote by $d\nu$ the Lebesgue measure on $\mathbb B^n$ the unit ball  of $\mathbb C^n$, and $d\sigma$  the normalized measure on $\mathbb S^n=\partial{\mathbb B^n}$ the boundary of
$\mathbb B^n$. As usual, we denote by $\mathcal H(\mathbb B^n)$  the space of holomorphic functions on $\mathbb B^n.$

For $\alpha>-1$, we write $d\nu_{\alpha}(z)=c_{\alpha}(1-|z|^2)^{\alpha}d\nu(z)$, where $c_\alpha$ is taken such that $\nu_\alpha(\mathbb B^n)=1$. 

For $\Phi$ a growth function, the Orlicz space $L_\alpha^{\Phi}(\mathbb B^n)$ is the space of functions $f$ such that
$$||f||_{\alpha,\Phi}:=\int_{\mathbb B^n}\Phi(|f(z)|)d\nu_{\alpha}(z)<\infty.$$

The weighted Bergman-Orlicz space $\mathcal A_\alpha^{\Phi}(\mathbb B^n)$ is the subspace
of $L_\alpha^{\Phi}(\mathbb B^n)$ consisting of holomorphic functions.

We define on $\mathcal A_\alpha^{\Phi}(\mathbb B^n)$ the following (quasi)-norm
\begin{equation}\label{BergOrdef1}
||f||^{lux}_{\alpha,\Phi}:=\inf\{\lambda>0: \int_{\mathbb B^n}\Phi\left(\frac{|f(z)|}{\lambda}\right)d\nu_{\alpha}(z)\le 1\}
\end{equation}
which is finite for $f\in \mathcal A_\alpha^{\Phi}(\mathbb B^n)$ (see \cite{sehbastevic}).

We observe that for  $\Phi(t)=t^p$, the corresponding Bergman-Orlicz space is the classical weighted Bergman spaces denoted by $\mathcal A_\alpha^{p}(\mathbb B^n)$   and defined by
$$\|f\|_{p,\alpha}^p=||f||_{\mathcal A_\alpha^{p}}^p:= \int_{\mathbb B^n}|f(z)|^pd\nu_{\alpha}(z)<\infty.$$

Recall that two growth functions $\Phi_1$ and $\Phi_2$ are said equivalent if there exists some constant $c$ such
that
$$\frac{1}{c}\Phi_1(\frac{t}{c}) \le \Phi_2(t)\le c\Phi_1(ct)$$
and observe that two equivalent growth functions define the same Orlicz space.
%(see \cite{BS} for the lower type functions).
\vskip .2cm

%\vskip .2cm
%Let us observe that if $\Phi$ is of upper type (resp. lower type) $p_1$, then it is of upper type (resp. %lower type) $p_2$ for any $\infty >p_2>p_1$ (resp. $p_2<p_1<\infty$). Hence, when we say $\Phi\in %\mathscr{U}^q$ (resp. $\Phi\in \mathscr{L}_p$), we suppose that $q$ (resp. $p$) is the smallest (resp. biggest) number $q_1$ (resp. $p_1$) such that $\Phi$ is of upper type $q_1$ (resp. lower type $p_1$).

We recall
 that given an analytic function $f$ on $\mathbb B^n$, the
 radial derivative $\mathcal{R}f$ of $f$ is defined by
 $$\mathcal{R}f(z)=\sum_{j=1}^{n}z_j \frac{\partial f}{\partial z_j}(z).$$
 We recall also that the gradient of $f\in H(\mathbb{B}^n)$ is defined by $$\nabla f(z)=\left(\frac{\partial f}{\partial z_1}(z),\ldots, \frac{\partial f}{\partial z_n}(z)\right).$$
 The invariant gradient at $z$ of the analytic function $f$ is defined by $$\widetilde{\nabla}f(z)=\nabla(f\circ \phi_z)(0)$$
 where $\phi_z$ is the automorphism of $\mathbb{B}^n$ mapping $0$ to $z$.
 
 We have the following inequalities between the above derivatives (see \cite[Lemma 2.14]{KZ}):
 \Be\label{eq:derivativesineq}
 (1-|z|^2)|\mathcal{R}f(z)|\le (1-|z|^2)|\nabla f(z)|\le |\widetilde{\nabla} f(z)|,\,\,\,\textrm{for all}\,\,\, z\in \mathbb{B}^n.
 \Ee
The following derivatives characterization of classical weighted Bergman spaces is a well known fact (see \cite[Theorem 2.16]{KZ}).
\btheo
Suppose $\alpha>-1$, $p>0$, and $f$ is holomorphic in $\mathbb{B}^n$. Then the following conditions are equivalent.
\begin{itemize}
\item[(a)] $f\in \mathcal{A}_\alpha^p(\mathbb{B}^n)$.
\item[(b)] $|\widetilde{\nabla} f(z)|\in L^p(\mathbb{B}^n, d\nu_\alpha)$
\item[(c)] $(1-|z|^2)|\nabla f(z)|\in L^p(\mathbb{B}^n, d\nu_\alpha)$.
\item[(d)] $(1-|z|^2)|\mathcal{R}f(z)|\in L^p(\mathbb{B}^n, d\nu_\alpha).$
\end{itemize}
\etheo
\vskip .2cm
Our main aim in this note is to extend the above result to Bergman-Orlicz spaces. Let us recall some more definitions.

We say that a growth function $\Phi$ is of upper type  $q \geq 1$ if there exists $C>0$ such that, for $s>0$ and $t\ge 1$,
\begin{equation}\label{uppertype}
 \Phi(st)\le Ct^q\Phi(s).\end{equation}
We denote by $\mathscr{U}^q$ the set of growth functions $\Phi$ of upper type $q$, (for some $q\ge 1$), such that the function $t\mapsto \frac{\Phi(t)}{t}$ is non-decreasing.

We say that $\Phi$ is of lower type $p > 0$ if there exists $C>0$ such that, for $s>0$ and $0<t\le 1$,
\begin{equation}\label{eq:lowertype}
 \Phi(st)\le Ct^p\Phi(s).\end{equation}
We denote by $\mathscr{L}_p$ the set of growth functions $\Phi$ of lower type $p$,  (for some $p\le 1$), such that the function $t\mapsto \frac{\Phi(t)}{t}$ is non-increasing.

We also observe  that we may always suppose that any $\Phi\in \mathscr{L}_p$ (resp. $\mathscr{U}_q$),  is concave (resp. convex) and
that $\Phi$ is a $\mathscr{C}^1$ function with derivative $\Phi^{\prime}(t)\backsimeq \frac{\Phi(t)}{t}$. %(see \cite{BS} for the lower type functions).
\vskip .2cm

%We say that $\Phi$ satisfies the $\Delta_2$-condition if there exists a constant $K>1$ such that, for any $t\ge 0$,
%\begin{equation}\label{eq:delta2condition}
 %\Phi(2t)\le K\Phi(t).\end{equation}

Our main result is the following.
\btheo\label{thm:main}
Suppose $\alpha>-1$. Assume that $\Phi\in \mathscr{U}^q\cup \mathscr{L}_p$, and $f$ is holomorphic in $\mathbb{B}^n$. Then the following conditions are equivalent.
\begin{itemize}
\item[(a)] $f\in \mathcal{A}_\alpha^\Phi(\mathbb{B}^n)$.
\item[(b)] $|\widetilde{\nabla} f(z)|\in L^\Phi(\mathbb{B}^n, d\nu_\alpha)$
\item[(c)] $(1-|z|^2)|\nabla f(z)|\in L^\Phi(\mathbb{B}^n, d\nu_\alpha)$.
\item[(d)] $(1-|z|^2)|\mathcal{R}f(z)|\in L^\Phi(\mathbb{B}^n, d\nu_\alpha).$
\end{itemize}
\etheo
As applications, we characterize the Gustavsson-Peetre interpolate of two Bergman-Orlicz spaces and symbols of bounded Ces\`aro-type operators on Bergman-Orlicz spaces.
\section{Preliminary results}
We give in this section some useful tools needed in our presentation.
\subsection{Some properties of growth functions}
We recall that the complementary function $\Psi$ of the convex growth function $\Phi$, is the function defined from $\mathbb R_+$ onto itself by
\begin{equation}\label{complementarydefinition}
\Psi(s)=\sup_{t\in\mathbb R_+}\{ts - \Phi(t)\}.
\end{equation}
We observe that if $\Phi\in \mathscr{U}^q$, then $\Psi$ is a growth function of lower type such that the function which $t\mapsto \frac{\Psi(t)}{t}$ is non-decreasing. 

We say that $\Phi$ satisfies the $\Delta_2$-condition if there exists a constant $K>1$ such that, for any $t\ge 0$,
\begin{equation}\label{eq:delta2condition}
 \Phi(2t)\le K\Phi(t).\end{equation}

We say that the growth function $\Phi$ satisfies the $\bigtriangledown_2-$condition whenever both $\Phi$ and its complementary satisfy the $\Delta_2-$conditon.
\vskip .2cm
% We say that  $\Phi\in \mathscr{U}^q$ satisfies the Dini condition if there exists a  constant $C>0$ such that, for $t>0$,
%\begin{equation}\label{dinicondition}
%\int_0^t\frac{\Phi(s)}{s^2}ds\leq C\frac{\Phi(t)}{t}.
%\end{equation}
%We observe that if $\Phi$ satisfies (\ref{dinicondition}), then $\Phi$ satisfies the $\bigtriangledown_2-$condition.
%\vskip .2cm
For $\Phi$ a $\mathcal C^1$ growth function, the lower and the upper indices of $\Phi$ are respectively defined by
$$a_\Phi:=\inf_{t>0}\frac{t\Phi^\prime(t)}{\Phi(t)}\,\,\,\textrm{and}\,\,\,b_\Phi:=\sup_{t>0}\frac{t\Phi^\prime(t)}{\Phi(t)}.$$
We recall that when $\Phi$ is convex, then $1\le a_\Phi\le b_\Phi<\infty$ and, if $\Phi$ is concave, then $0<a_\Phi\le b_\Phi\le 1$. We observe with \cite[Lemma 2.6]{DHZZ} that a convex growth function satisfies the $\bigtriangledown_2-$condition if and only if $1< a_\Phi\le b_\Phi<\infty$. 
\vskip .2cm
It is easy to see that if $\Phi$ is a $\mathcal C^1$ growth function.  Then the functions $\frac{\Phi(t)}{t^{a_\Phi}}$ and $\frac{\Phi^{-1}(t)}{t^{\frac{1}{b_\Phi}}}$ are increasing. As a consequence, we have the following useful fact.

\blem\label{lem:phip}
Let $\Phi\in \mathscr{L}_p$. Then the growth function $\Phi_p$, defined by
$\Phi_p(t)=\Phi(t^{1/p})$ is in $\mathscr{U}^q$ for some $q\geq 1$. %So we may assume that $\Phi_p$ is convex.
\elem
We also make the following observation.
\begin{proposition}\label{phiandinverse}
The following assertion holds:
\begin{center}
    $\Phi\in \mathscr{L}_p$ if and only if $\Phi^{-1}\in \mathscr{U}^{1/p}.$
\end{center}
\end{proposition}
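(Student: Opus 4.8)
The plan is to reduce both the lower-type and the upper-type conditions to equivalent statements about the ratio $\Phi(y)/\Phi(x)$, and then to pass between the two by the change of variables that exchanges $\Phi$ with $\Phi^{-1}$. First I would record that any $\Phi\in\mathscr{L}_p$ may be taken to be a strictly increasing continuous bijection of $[0,\infty)$ onto itself: it is concave, $\mathcal C^1$, with $\Phi'(t)\backsimeq \frac{\Phi(t)}{t}>0$, so $\Phi^{-1}$ is a well-defined continuous strictly increasing growth function, and likewise any member of $\mathscr{U}^{1/p}$ is invertible. I would also note that $p\le 1$ is exactly $1/p\ge 1$, so the index constraints built into the definitions of the two classes are compatible.

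Next I would reformulate the types. Writing $\lambda=y/x$, the lower-type-$p$ inequality $\Phi(\lambda x)\le C\lambda^p\Phi(x)$ for $0<\lambda\le 1$ is equivalent to
\[
\frac{\Phi(y)}{\Phi(x)}\le C\left(\frac{y}{x}\right)^p,\qquad 0<y\le x.
\]
In the same way, the upper-type-$1/p$ inequality for a function $\Psi$ is equivalent to $\Psi(b)/\Psi(a)\le C'(b/a)^{1/p}$ for $0<a\le b$.

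Then comes the core step. Assuming $\Phi$ has lower type $p$, I set $a=\Phi(y)$ and $b=\Phi(x)$ with $0<y\le x$, so that $0<a\le b$ and $\Phi^{-1}(a)=y$, $\Phi^{-1}(b)=x$. The reformulated lower-type inequality then reads $a/b\le C\bigl(\Phi^{-1}(a)/\Phi^{-1}(b)\bigr)^p$, which rearranges to $\Phi^{-1}(b)/\Phi^{-1}(a)\le C^{1/p}(b/a)^{1/p}$, precisely the upper-type-$1/p$ bound for $\Phi^{-1}$; the converse is the same computation read backwards, so the type equivalence holds with matching constants $C'=C^{1/p}$.

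Finally I would transfer the monotonicity requirement. Using $\Phi^{-1}(s)/s=1/\bigl(\Phi(t)/t\bigr)$ at $s=\Phi(t)$, together with the fact that $t=\Phi^{-1}(s)$ increases with $s$, I obtain that $t\mapsto\Phi(t)/t$ being non-increasing is equivalent to $s\mapsto\Phi^{-1}(s)/s$ being non-decreasing. Combining the type equivalence with this monotonicity equivalence yields $\Phi\in\mathscr{L}_p\iff\Phi^{-1}\in\mathscr{U}^{1/p}$. The only places that demand care are the invertibility setup and the constant bookkeeping in the substitution; the arithmetic itself is elementary, so I do not anticipate a genuine obstacle beyond writing the change of variables cleanly.
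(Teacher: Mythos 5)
Your argument is correct and complete: the reformulation of lower type $p$ as $\Phi(y)/\Phi(x)\le C(y/x)^p$ for $y\le x$, the substitution $a=\Phi(y)$, $b=\Phi(x)$ to convert it into the upper-type-$1/p$ bound for $\Phi^{-1}$ with constant $C^{1/p}$, and the identity $\Phi^{-1}(s)/s=\bigl(\Phi(t)/t\bigr)^{-1}$ at $s=\Phi(t)$ to transfer the monotonicity requirement all check out, and your preliminary remarks (that lower type $p>0$ forces $\Phi$ to be an unbounded strictly increasing bijection, and that $p\le 1$ matches $1/p\ge 1$) cover the needed setup. The paper records this proposition as an observation without proof, so there is no authorial argument to compare against; yours is the natural substitution argument one would expect.
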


We observe that if $\Phi$ is of upper type (resp. lower type) $p_1$, then it is of upper type (resp. lower type) $p_2$ for any $\infty >p_2>p_1$ (resp. $p_2<p_1<\infty$). Hence, when we say $\Phi\in \mathscr{U}^q$ (resp. $\Phi\in \mathscr{L}_p$), we suppose that $q$ (resp. $p$) is the smallest (resp. biggest) number $q_1$ (resp. $p_1$) such that $\Phi$ is of upper type $q_1$ (resp. lower type $p_1$). We also observe that $a_\Phi$ (resp. $b_\Phi$) coincides with the biggest (resp. smallest) number $p$ such that $\Phi$ is of lower (resp. upper) type $p$.
\subsection{Operators on Orlicz spaces}
\begin{definition}
Let $\Phi$ be a growth function. A
linear operator $T$ defined on $L^\Phi(\mathbb{B}^n, d\nu_\alpha)$ is said to be of mean strong type $(\Phi, \Phi)_\alpha$ if
\Be\label{eq:meanstrong}
\int_{\mathbb{B}^n}\Phi(|Tf|) d\nu_\alpha(z)\le C\int_{\mathbb{B}^n}\Phi(|f|) d\nu_\alpha(z)
\Ee
for any $f\in L^\Phi(\mathbb{B}^n, d\nu_\alpha)$, and $T$ is said to be mean weak type $(\Phi,\Phi)_\alpha$ if
\Be\label{eq:meanweak} \sup_{t>0}\Phi(t)\nu_\alpha\left(\{z\in \mathbb{B}^n : |Tf(z)| > t\}\right)\le C\int_{\mathbb{B}^n}\Phi(|f|) d\nu_\alpha(z)
\Ee
for any $f\in L^\Phi(\mathbb{B}^n, d\nu_\alpha)$, where C is independent of f.
\end{definition}
We observe that the mean strong type $(t^p, t^p)_\alpha$ is the usual strong type $(p, p)$ coincide. We also note  if the operator $T$ is of mean strong type $(\Phi, \Phi)_\alpha$, then $T$ is bounded on $L^\Phi(\mathbb{B}^n,d\nu_\alpha)$.
\vskip .2cm
The following result is adapted from \cite[Theorem 4.3]{DHZZ}.
\btheo\label{thm:interpolation}
Let $\Phi_0, \Phi_1$ and $\Phi_2$ be three convex growth functions. Suppose that their
upper and lower indices satisfy the following condition
\Be\label{eq:lowerupperindicesrel} 1\le a_{\Phi_0}\le b_{\Phi_0}<a_{\Phi_2}\le b_{\Phi_2}<a_{\Phi_1}\le b_{\Phi_1}<\infty.
\Ee
If T is of mean weak types $(\Phi_0,\Phi_0)_\alpha$ and $(\Phi_1,\Phi_1)_\alpha$, then it is of mean strong type
$(\Phi_2, \Phi_2)_\alpha$.
\etheo
Let $\beta>-1$ be and consider the operator $P_\beta$ defined for functions $f$ on $\mathbb{B}^n$ by
\begin{equation} P_\beta(f)(z)=\int_{\mathbb B^n}\frac{f(\xi)}{(1-\langle z,\xi \rangle)^{n+1+\beta}}d\nu_\beta(\xi).\end{equation}
The operator $P_\beta$ is the Bergman projection, that is the orthogonal projection of $L^2(\mathbb{B}^n,d\nu_\beta)$ onto its closed subspace $\mathcal{A}_\beta^2(\mathbb{B}^n)$. We have the following result.
\btheo\label{thm:bdedfamillyPhi}
Let $\alpha, \beta>-1$. Let $\Phi$ be a convex growth function and denote by $a_\Phi$ its lower indice. Assume that there is $1<p_0<a_\Phi$ such that
$\alpha+1<p_0(\beta+1)$. Then $P_\beta$ is of mean strong type $(\Phi,\Phi)_\alpha$.
\etheo
\begin{proof}
This result is well known when $\Phi$ is a power function (see for example \cite[Theorem 2.10]{KZ}). It follows in particular that $P_\beta$ is bounded on $L^{p_0}(\mathbb{B}^n, d\nu_\alpha)$ and on $L^{p_1}(\mathbb{B}^n, d\nu_\alpha)$ for $p_1>b_\Phi$. Hence from the interpolation result Theorem \ref{thm:interpolation} we deduce that $P_\beta$ is of mean strong type $(\Phi,\Phi)_\alpha$.
\end{proof}
In particular, we have the following.
\btheo\label{thm:bdednessbergproj}
Let $\alpha>-1$. Assume that $\Phi\in \mathscr{U}^q$ and satisfies the $\bigtriangledown_2-$condition. Then the  Bergman projection $P_\alpha$ is bounded on $L^{\Phi}(\mathbb B^n, d\nu_\alpha)$.
\etheo
\subsection{Some useful estimates and test functions}
The next proposition gives pointwise estimates for functions in
$\mathcal A_{\alpha}^{\Phi}(\mathbb B^n)$, $\Phi\in \mathscr{L}_p\cup \mathscr{U}^q$ (see \cite{sehbastevic, ST2})
\blem\label{lem:pointwiseestimate}
Let $\Phi\in \mathscr{L}_p\cup \mathscr{U}^q$ and $\alpha>-1$. There is a constant $C>1$ such that
 for any $f\in \mathcal A_{\alpha}^{\Phi}(\mathbb B^n)$,
\begin{equation}\label{eq:pointwiseestimate}
|f(z)|\le C\Phi^{-1}\left(\frac 1{(1-|z|^2)^{n+1+\alpha}}\right)\|f\|_{\alpha,\Phi}^{lux}.
\end{equation}
\elem
We refer to \cite[Lemma 2.15]{ZZ} for the following result.
\blem\label{lem:smallestim}
 Let $0<p\le 1$. Then there is a constant $C>0$ such that for any $f\in \mathcal{A}_\alpha^{p}(\mathbb{B}^n)$, 
\begin{equation}\label{eq:intestimsmall}
\int_{\mathbb B^n}|f(z)|(1-|z|^2)^{(\frac{1}{p}-1)(n+1+\alpha)}d\nu_\alpha(z)\le C\|f\|_{\alpha,p}^p.
\end{equation}
\elem

The following gives example of functions in Bergman-Orlicz spaces. We refer to \cite{sehbastevic,ST2} for a proof.
 \begin{lemma}\label{lem:testfunctionberg}
Let $-1<\alpha<\infty$, $a\in \mathbb B^n$. Let $k>1$. Suppose that $\Phi\in \mathscr {L}_p\cup \mathscr {U}^q$. Then the following function is in $\mathcal A_{\alpha}^{\Phi}(\mathbb B^n)$
$$f_a(z)=\Phi^{-1}\left(\frac{1}{(1-|a|)^{n+1+\alpha}}\right)\left(\frac{1-|a|^2}{1-\langle z,a\rangle}\right)^{k(n+1+\alpha)}.$$
Moreover, $||f_a||_{\mathcal A_{\alpha}^{\Phi}}^{lux}\lesssim 1.$
\end{lemma}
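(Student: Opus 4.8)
The plan is to verify the two claims of Lemma \ref{lem:testfunctionberg} separately: the membership $f_a \in \mathcal A_\alpha^\Phi(\mathbb B^n)$ and the uniform bound $\|f_a\|^{lux}_{\mathcal A_\alpha^\Phi}\lesssim 1$. Since the luxemburg quasi-norm controls membership, establishing the uniform bound will in fact subsume the membership claim, so I would focus on showing that $\int_{\mathbb B^n}\Phi\!\left(|f_a(z)|\right)d\nu_\alpha(z)\le C$ with $C$ independent of $a$; by the definition \eqref{BergOrdef1} this gives $\|f_a\|^{lux}_{\mathcal A_\alpha^\Phi}\lesssim 1$ (absorbing $C$ into $\lambda$ via the type condition on $\Phi$).

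First I would write out the integrand explicitly. Setting $M_a = \Phi^{-1}\!\left((1-|a|)^{-(n+1+\alpha)}\right)$, we have $|f_a(z)| = M_a\left(\frac{1-|a|^2}{|1-\langle z,a\rangle|}\right)^{k(n+1+\alpha)}$. The key step is to exploit the type condition on $\Phi$ to push the factor $\left(\frac{1-|a|^2}{|1-\langle z,a\rangle|}\right)^{k(n+1+\alpha)}$ through $\Phi$. When $\Phi\in\mathscr U^q$ is of upper type $q$, for the range where the inner factor is $\ge 1$ the upper-type inequality \eqref{uppertype} gives $\Phi(M_a t)\le C t^q \Phi(M_a)$; when the factor is $\le 1$ I would use the monotonicity of $t\mapsto \Phi(t)/t$ (i.e.\ essentially a lower-type-one bound) to control it. Similarly for $\Phi\in\mathscr L_p$ I would use \eqref{eq:lowertype}. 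In either case, and using $\Phi(M_a)=\Phi\circ\Phi^{-1}\!\left((1-|a|)^{-(n+1+\alpha)}\right)=(1-|a|)^{-(n+1+\alpha)}\simeq (1-|a|^2)^{-(n+1+\alpha)}$, the integral reduces up to constants to a power-type integral of the form
\begin{equation*}
(1-|a|^2)^{k(n+1+\alpha)q - (n+1+\alpha)}\int_{\mathbb B^n}\frac{(1-|z|^2)^\alpha}{|1-\langle z,a\rangle|^{k(n+1+\alpha)q}}\,d\nu(z),
\end{equation*}
and the remaining task is to show this quantity is bounded uniformly in $a$.

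The main obstacle, and the technical heart of the argument, is controlling this last integral. Here I would invoke the standard Forelli--Rudin type estimates for integrals $\int_{\mathbb B^n}\frac{(1-|z|^2)^\alpha}{|1-\langle z,a\rangle|^{c}}\,d\nu(z)$ (as in \cite{KZ}), which behave like $(1-|a|^2)^{n+1+\alpha-c}$ when $c>n+1+\alpha$. With $c=k(n+1+\alpha)q$ and $k>1$, the exponent $c$ exceeds $n+1+\alpha$, so the integral is comparable to $(1-|a|^2)^{(n+1+\alpha)(1-kq)}$, and multiplying by the prefactor $(1-|a|^2)^{k(n+1+\alpha)q-(n+1+\alpha)}$ yields a bounded constant. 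The delicate point is bookkeeping the exponents so that the powers of $(1-|a|^2)$ cancel exactly; I expect this cancellation to work precisely because $k>1$ guarantees the Forelli--Rudin integral is in its convergent-decay regime, and because $\Phi(M_a)$ matches $(1-|a|)^{-(n+1+\alpha)}$ on the nose. The $\mathscr L_p$ case is entirely analogous, replacing the upper-type exponent $q$ by the lower-type exponent $p$ and checking $kp(n+1+\alpha)>n+1+\alpha$, which again holds since $k>1$ and one may take the relevant type exponent arbitrarily close to its extremal value.
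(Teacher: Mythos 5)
Your overall strategy is the right one, and indeed the paper itself offers no proof of this lemma (it only cites \cite{sehbastevic,ST2}), so the comparison can only be on correctness. The $\mathscr{U}^q$ half of your argument is sound: splitting according to whether the factor $t_z=\bigl(\tfrac{1-|a|^2}{|1-\langle z,a\rangle|}\bigr)^{k(n+1+\alpha)}$ is $\ge 1$ or $\le 1$, using the upper-type inequality respectively the monotonicity of $\Phi(t)/t$, and then Forelli--Rudin with exponent $k q(n+1+\alpha)>n+1+\alpha$ gives exact cancellation of the powers of $(1-|a|^2)$, since $\Phi(M_a)=(1-|a|)^{-(n+1+\alpha)}$.

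The gap is in the $\mathscr{L}_p$ case, in the final sentence. You need $kp(n+1+\alpha)>n+1+\alpha$, i.e.\ $kp>1$, for the Forelli--Rudin integral to be in its decaying regime; you assert this "holds since $k>1$ and one may take the relevant type exponent arbitrarily close to its extremal value." Neither part of that is valid: here $p\le 1$ (typically $p<1$), so $k>1$ does not give $kp>1$; and lower-type exponents can only be \emph{decreased} (the paper notes that lower type $p_1$ implies lower type $p_2$ for $p_2<p_1$), so you cannot push $p$ upward toward $1$. When $kp<1$ the Forelli--Rudin integral is bounded below by a positive constant independent of $a$, and your own bookkeeping then yields $\int_{\mathbb{B}^n}\Phi(|f_a|)\,d\nu_\alpha\gtrsim (1-|a|)^{(kp-1)(n+1+\alpha)}\to\infty$; for $\Phi(t)=t^p$ this lower bound is exact, so the uniform norm bound genuinely fails for $1<k\le 1/p$ (each individual $f_a$ is of course still in the space, being bounded). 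The fix is to require $k>1/p$ in the $\mathscr{L}_p$ case — which is exactly the choice the author makes when invoking this lemma in the proof of Theorem \ref{thm:compactcesaro} ("taking $k>\frac{1}{p}$") — and your argument then goes through verbatim with $q$ replaced by $p$. You should state this strengthened hypothesis explicitly rather than claim it follows from $k>1$.
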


\section{Proof of Theorem \ref{thm:main}}
%Note that from the inequalities (\ref{eq:derivativesineq}), we have that (b) implies (c), and (c) implies (d). Hence we only have to prove that (d) implies (a). This is done in the next two lemmas.
Let us start with the following result.
\blem\label{lem:invargradestim}
Let $\alpha>-1$. Assume that $\Phi\in \mathscr{U}^q\cup \mathscr{L}_p$. Then there exists a constant $C>0$ such that for any $f\in \mathcal{A}_\alpha^\Phi(\mathbb{B}^n)$,
\Be\label{eq:invargradestim}
\int_{\mathbb{B}^n}\Phi(|\widetilde{\nabla}f(z)|)d\nu_\alpha(z)\le C\int_{\mathbb{B}^n}\Phi(|f(z)-f(0)|)d\nu_\alpha(z).
\Ee
\elem
\begin{proof}
We follow the proof of \cite[Theorem 2.16]{KZ} making some crucial modifications where needed. We start by recalling that if $\Phi\in \mathscr{U}^q$, then $\mathcal{A}_\alpha^\Phi(\mathbb{B}^n)$ continuously embeds into $\mathcal{A}_\alpha^1(\mathbb{B}^n)$, and $\mathcal{A}_\alpha^\Phi(\mathbb{B}^n)$ continuously embeds into $\mathcal{A}_\alpha^p(\mathbb{B}^n)$ when $\Phi\in \mathscr{L}_p$ . Let $\beta>\alpha$. Put \Be\label{eq:pPhi} p_\Phi= \left\{\begin{array}{lcr}1 & \mbox{if} & \Phi\in \mathscr{U}^q\\ p & \mbox{if} & \Phi\in \mathscr{L}_p.\end{array}\right.\Ee It follows from \cite[Lemma 2.4]{KZ} that there exists $C_1>0$ such that for any $g\in H(\mathbb{B}^n)$,
$$|\nabla g(0)|^{p_\Phi}\le C_1\int_{\mathbb{B}^n}|g(w)|^{p_\Phi}d\nu_\beta(w).$$
Put $g=f\circ \phi_z$, $z\in \mathbb{B}^n$, where $\phi_z$ is the automorphism of $\mathbb{B}^n$ such that $\phi_z(0)=z$. We obtain
$$|\widetilde{\nabla} f(z)|^{p_\Phi}\le C_1\int_{\mathbb{B}^n}|f(w)|^{p_\Phi}\frac{(1-|z|^2)^{n+1+\beta}}{|1-\langle z,w\rangle|^{2(n+1+\beta)}} d\nu_\beta(w).$$
We observe with the help of \cite[Proposition 1.4.10]{R} that $\frac{(1-|z|^2)^{n+1+\beta}}{|1-\langle z,w\rangle|^{2(n+1+\beta)}} d\nu_\beta(w)$ is up to a constant a probability measure. It follows using the convexity of $$\Phi_p(t)= \left\{\begin{array}{lcr}\Phi(t) & \mbox{if} & \Phi\in \mathscr{U}^q\\ \Phi(t^\frac{1}{p}) & \mbox{if} & \Phi\in \mathscr{L}_p\end{array}\right.$$ and Jensen's Inequality that
$$\Phi(|\widetilde{\nabla} f(z)|)\le C_2\int_{\mathbb{B}^n}\Phi(|f(w)|)\frac{(1-|z|^2)^{n+1+\beta}}{|1-\langle z,w\rangle|^{2(n+1+\beta)}} d\nu_\beta(w).$$
Finally, integrating both sides of the last inequality over $\mathbb{B}^n$ with respect to $d\nu_\alpha(z)$ and applying Fubini's Theorem and \cite[Proposition 1.4.10]{R}, we obtain
$$\int_{\mathbb{B}^n}\Phi(|\widetilde{\nabla} f(z)|)d\nu_\alpha \le C_2\int_{\mathbb{B}^n}\Phi(|f(z)|)d\nu_\alpha(z).$$

Replacing $f$ by $f-f(0)$, we have 
$$\int_{\mathbb{B}^n}\Phi(|\widetilde{\nabla} f(z)|)d\nu_\alpha \le C_2\int_{\mathbb{B}^n}\Phi(|f(z)-f(0)|)d\nu_\alpha(z).$$
The proof is complete.
\end{proof}
We also obtain the following.
\blem\label{lem:normdominatedbyradialderiv}
Let $\alpha>-1$. Assume that $\Phi\in \mathscr{U}^q$ or $\Phi\in \mathscr{L}_p$. Then there exists a constant $C>0$ such that for any $f\in H(\mathbb{B}^n)$ such that $(1-|z|^2)\mathcal{R}f(z)\in L^\Phi(\mathbb{B}^n, d\nu_\alpha)$,
\Be\label{eq:normdominatedbyradialderiv}
\int_{\mathbb{B}^n}\Phi(|f(z)-f(0)|)d\nu_\alpha(z)\le C\int_{\mathbb{B}^n}\Phi((1-|z|^2)|\mathcal{R}f(z)|)d\nu_\alpha(z).
\Ee
\elem
\begin{proof}
Let $f\in H(\mathbb{B}^n)$ be such that $(1-|z|^2)\mathcal{R}f(z)\in L^\Phi(\mathbb{B}^n, d\nu_\alpha)$. Then following  the proof of \cite[Theorem 2.16]{KZ} at page 51, we have that for $\beta$ large enough,
\Be\label{eq:page51}
|f(z)-f(0)|\le \int_{\mathbb{B}^n}\frac{|\mathcal{R}f(w)|}{|1-\langle z,w\rangle|^{n+\beta}}d\nu_\beta(w).
\Ee
Let us first consider the case of $\Phi\in \mathscr{U}^q$. Fix $p$ so that $1<p<a_\Phi$, and observe that (\ref{eq:page51}) is equivalent to
$$|f(z)-f(0)|\le CP_{\beta-1}((1-|\cdot|^2)|\mathcal{R}f(\cdot)|)(z).$$
Taking $\beta$ large enough so that $$0<\alpha+1<p\beta,$$
we obtain from Theorem \ref{thm:bdedfamillyPhi} that
$$\int_{\mathbb{B}^n}\Phi(|f(z)-f(0)|)d\nu_\alpha(z)\le C\int_{\mathbb{B}^n}\Phi((1-|z|^2)|\mathcal{R}f(z)|)d\nu_\alpha(z).$$
We next consider the case of $\Phi\in \mathscr{L}_p$. We assume that $\beta$ is large enough so that
$$\beta=\frac{n+1+\gamma}{p}-(n+1),\,\,\,\gamma>\alpha+p.$$
Rewriting (\ref{eq:page51}) as 
$$|f(z)-f(0)|\le \int_{\mathbb{B}^n}\left|\frac{\mathcal{R}f(w)}{(1-\langle z,w\rangle)^{n+\beta}}\right|(1-|w|^2)^{(\frac{1}{p}-1)(n+1+\gamma)}d\nu(w),$$
we obtain from Lemma \ref{lem:smallestim} that
$$|f(z)-f(0)|^p\le C\int_{\mathbb{B}^n}\left|\frac{\mathcal{R}f(w)}{(1-\langle z,w\rangle)^{n+\beta}}\right|^pd\nu_\gamma(w),$$
or equivalently,
\Beas |f(z)-f(0)|^p &\le& C\int_{\mathbb{B}^n}\frac{\left((1-|w|^2)|\mathcal{R}f(w)|\right)^p}{|1-\langle z,w\rangle|^{n+1+(\gamma-p)}}d\nu_{\gamma-p}(w)\\ &=& CP_{\gamma-p}\left([(1-|\cdot|^2)|\mathcal{R}f(\cdot)|]^p\right)(z).
\Eeas
As the growth function $t\mapsto \Phi_p(t)=\Phi(t^\frac{1}{p})$ is in $\mathscr{U}^q$, proceeding as in the first part of this proof, we obtain 
\Beas
\int_{\mathbb{B}^n}\Phi(|f(z)-f(0)|)d\nu_\alpha(z) &\le& C\int_{\mathbb{B}^n}\Phi_p\left([(1-|\cdot|^2)|\mathcal{R}f(\cdot)|]^p\right)(z)d\nu_\alpha(z)\\ &=& C\int_{\mathbb{B}^n}\Phi((1-|z|^2)|\mathcal{R}f(z)|)d\nu_\alpha(w).
\Eeas
The proof is complete.

\end{proof}
We can now prove Theorem \ref{thm:main}.
\begin{proof}[Proof of Theorem \ref{thm:main}]
That (b) implies (c) and (c) implies (d) follow from (\ref{eq:derivativesineq}). That (a) implies (b) is Lemma \ref{lem:invargradestim} and that (d) implies (a) is Lemma \ref{lem:normdominatedbyradialderiv}. The proof is complete.

\end{proof}
\section{Applications}
\subsection{The Gustavsson-Peetre interpolation of two Bergman-Orlicz spaces}
Our aim in this section is to give an application of Theorem \ref{thm:main} to a generalized interpolation 
of quasi-Banach spaces due to J. Gustavsson and J. Peetre. For this, we first introduce several definitions and results.
\begin{definition}
A function $\rho: [0,\infty)\rightarrow [0,\infty)$ is said to be pseudo-concave, if it is continuous on $(0,\infty)$ and $$\rho(s)\le \max(1,\frac{s}{t})\rho(t),\,\,\,\textrm{for all}\,\,\,s,t>0.$$
%Let $\rho$ be a quasi-concave function. Then the function $\tilde{\rho}$ defined by $$\tilde{\rho}(t):=%\inf_{s>0}(1+\frac{t}{s})\rho(s)$$
%is concave and $$\rho(t)\le \tilde{\rho}(t)\le 2\rho(t)\,\,\,\textrm{for all}\,\,\,t>0$$
%(see..).
 We denote by $\mathcal{T}$ the set of all pseudo-concave functions.
\end{definition} 

\begin{definition}
A function $\rho\in \mathcal{T}$ is said to be in $\mathcal{T}^{+-}$, if 
 $$\sup_{x}\frac{\rho(\lambda x)}{\rho(x)}=o(\max(1,\lambda))\,\,\,\textrm{as}\,\,\,x\rightarrow 0\,\,\,\textrm{or}\,\,\,x\rightarrow \infty.$$
\end{definition} 

As example of functions in $\mathcal{T}^{+-}$ we have of course concave functions. The following function was provided in \cite{Gustavsson} as a non trivial element in $\mathcal{T}^{+-}$: $$\rho(t)=t^\theta(\log(e+t))^\alpha(e+\frac{1}{t})^\beta$$ where $0<\theta<1$, and $\alpha, \beta$ are real numbers.

\begin{definition}[J. Gustavsson and J. Peetre \cite{Gustavsson}]
Let $A_0$ and $A_1$ be quasi-Banach spaces, both embedded in a Hausdorff topological space $\mathcal{A}$. We call $\vec{A}=(A_0,A_1)$ a quasi-Banach couple. Let $\rho\in \mathcal{T}$. We denote by $\langle\vec{A}\rangle_\rho=\langle A_0,A_1\rangle_\rho$ the space of all elements $a\in \sum(\vec{A}):=A_0+A_1$ such that there exists a sequence $u=\{u_\alpha\}_{\alpha\in \mathbb{Z}}$ of elements $u_\alpha\in \Delta(\vec{A}):=A_0\cap A_1$ such that
\Be
a=\sum_{\alpha\in \mathbb{Z}}u_\alpha\,\,\,(\textrm{convergence in}\,\,\,\sum(\vec{A})),
\Ee
for every finite subset $F\subset \mathbb{Z}$ and every real sequence $\xi=\{\xi_\alpha\}_{\alpha\in F}$ with $|\xi_\alpha|\le 1$, we have
$$\|\sum_{F}\frac{\xi_\alpha 2^{k\alpha}u_\alpha}{\rho(2^\alpha)}\|\le C\,\,\,(k=0,1)$$
with $C$ independent of $F$ and $\xi$.

We equip the space $\langle\vec{A}\rangle_\rho$ with the semi-norm $$\|a\|_{\langle\vec{A}\rangle_\rho}:= \inf_{u} C,$$
where the  infimum is taken over all admissible sequences $u$ as above.  
\end{definition} 
As observed in \cite{Gustavsson}, if $\rho\in \mathcal{T}^{+-}$, then $\|a\|_{\langle\vec{A}\rangle_\rho}$ is a quasi-norm and $\langle\vec{A}\rangle_\rho$ is a quasi-Banach space.

Let us recall that if $T: A\rightarrow B$ is a continuous linear operator between two quasi-Banach spaces $A$ and $B$, the operator norm of $T$ denoted $\|T\|_{A\rightarrow B}$ is defined by $$\|T\|_{A\rightarrow B}:=\sup_{x\in A, x\neq 0}\frac{\|Tx\|_{B}}{\|x\|_A}.$$
\begin{proposition}\textsc{(\cite[Proposition 6.1.]{Gustavsson})}\label{prop:functorinterp}
Let $\vec{A}=(A_0,A_1)$ and $\vec{B}=(B_0,B_1)$ be two quasi-Banach couples. If $T:\vec{A}\rightarrow \vec{B}$ is a continuous linear mapping, that is the restriction $T_i=T|A_i: A_i\rightarrow B_i$, ($i=0,1$) is continuous, then $T:\langle\vec{A}\rangle_\rho\rightarrow \langle\vec{B}\rangle_\rho$ is continuous and $$\|T\|_{\langle\vec{A}\rangle_\rho\rightarrow \langle\vec{B}\rangle_\rho}\le \max(\|T\|_{A_0\rightarrow B_0}, \|T\|_{A_1\rightarrow B_1}).$$
That is the functor $(A_0,A_1)\mapsto \langle\vec{A}\rangle_\rho$ is an interpolation space.
\end{proposition}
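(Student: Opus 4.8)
The plan is to prove the interpolation property directly from the definition of $\langle\vec{A}\rangle_\rho$, using the obvious candidate decomposition obtained by applying $T$ termwise. Set $M=\max(\|T\|_{A_0\rightarrow B_0},\|T\|_{A_1\rightarrow B_1})$. Given $a\in\langle\vec{A}\rangle_\rho$, the idea is to fix an admissible sequence $u=\{u_\alpha\}_{\alpha\in\mathbb{Z}}$ for $a$ whose associated constant $C$ is arbitrarily close to $\|a\|_{\langle\vec{A}\rangle_\rho}$, to set $v_\alpha:=Tu_\alpha$, and to show that $\{v_\alpha\}_{\alpha\in\mathbb{Z}}$ is an admissible sequence for $Ta$ with associated constant at most $MC$.

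First I would address membership and convergence. Since each $u_\alpha\in\Delta(\vec{A})=A_0\cap A_1$ and $T$ maps $A_i$ into $B_i$ for $i=0,1$, we get $v_\alpha=Tu_\alpha\in B_0\cap B_1=\Delta(\vec{B})$. The operator $T$ extends to a continuous linear map on the sum $\sum(\vec{A})=A_0+A_1$: this is the standard fact that two continuous restrictions agreeing on $\Delta(\vec{A})$ determine a single continuous map on the sum equipped with its natural (quasi-)norm. Because $a=\sum_\alpha u_\alpha$ converges in $\sum(\vec{A})$ and $T$ is continuous from $\sum(\vec{A})$ into $\sum(\vec{B})$, applying $T$ gives $Ta=\sum_\alpha Tu_\alpha=\sum_\alpha v_\alpha$ with convergence in $\sum(\vec{B})$, as required in the definition.

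Next I would verify the defining inequality. Fix a finite set $F\subset\mathbb{Z}$ and a real sequence $\xi=\{\xi_\alpha\}_{\alpha\in F}$ with $|\xi_\alpha|\le 1$. Using the linearity of $T$ together with $v_\alpha=Tu_\alpha$, for each $k\in\{0,1\}$ one has
\[
\Big\|\sum_F\frac{\xi_\alpha 2^{k\alpha}v_\alpha}{\rho(2^\alpha)}\Big\|_{B_k}=\Big\|T\Big(\sum_F\frac{\xi_\alpha 2^{k\alpha}u_\alpha}{\rho(2^\alpha)}\Big)\Big\|_{B_k}\le\|T\|_{A_k\rightarrow B_k}\Big\|\sum_F\frac{\xi_\alpha 2^{k\alpha}u_\alpha}{\rho(2^\alpha)}\Big\|_{A_k}\le MC,
\]
where the final step uses the admissibility of $u$ for $a$. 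Since $MC$ is independent of $F$ and $\xi$, the sequence $\{v_\alpha\}$ is admissible for $Ta$, so $Ta\in\langle\vec{B}\rangle_\rho$ with $\|Ta\|_{\langle\vec{B}\rangle_\rho}\le MC$. Letting $C\downarrow\|a\|_{\langle\vec{A}\rangle_\rho}$ gives $\|Ta\|_{\langle\vec{B}\rangle_\rho}\le M\|a\|_{\langle\vec{A}\rangle_\rho}$, which is exactly the asserted operator-norm bound and shows the functor is an interpolation functor.

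There is no deep obstacle: the argument is essentially formal. The only two points needing care are (i) justifying that $T$ passes to a continuous map on the sum space, so that it commutes with the convergent series defining $a$, and (ii) tracking the constant so that it is governed by $M=\max(\|T\|_{A_0\rightarrow B_0},\|T\|_{A_1\rightarrow B_1})$ rather than a sum of the two operator norms. Both are settled by invoking the correct operator norm $\|T\|_{A_k\rightarrow B_k}$ in the $k$-th estimate and then bounding it by the maximum.
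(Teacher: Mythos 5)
Your argument is correct and is essentially the standard (and the original Gustavsson--Peetre) proof: apply $T$ termwise to an admissible decomposition, use linearity over the finite sums, and bound each $k$-th estimate by $\|T\|_{A_k\rightarrow B_k}\le M$. The paper itself supplies no proof of this proposition --- it is imported verbatim as Proposition 6.1 of the cited Gustavsson--Peetre article --- so there is nothing in the text to diverge from; the only points worth the care you already gave them are the continuity of $T$ on $\sum(\vec{A})$ (so that $T$ commutes with the convergent series) and keeping the constant at the maximum rather than the sum of the two restricted operator norms.
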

We call $\langle\vec{A}\rangle_\rho$ the Gustavsson-Peetre interpolate of $A_0$ and $A_1$. As observed in \cite{Gustavsson}, when $\rho(s)=s^{\theta}$, $0< \theta<1$, $\langle\vec{A}\rangle_\rho$ corresponds to the complex interpolation (see also \cite{Cobos,Kraynek,Rao}). For more on complex interpolation, we refer to the book \cite{Berg}. 

The following is a restriction of \cite[Theorem 7.3]{Gustavsson} to the class of growth functions we are interested in and our spaces.
\begin{proposition}\label{prop:GPinterorlicz}
Let $\Phi_i\in \mathscr{U}^q$, $i=0,1$ satisfying the $\triangledown_2-$condition, and let $\alpha>-1$. Assume that $\rho\in \mathcal{T}^{+-}$ and let $\Phi$ be defined by  
\Be\label{eq:phiinterp}\Phi^{-1}=\Phi_0^{-1}\rho(\frac{\Phi_1^{-1}}{\Phi_0^{-1}}).\Ee
 Then $$L^{\Phi}(\mathbb{B}^n,d\nu_\alpha)=\vec{L^{\Phi}}_\rho(\mathbb{B}^n,d\nu_\alpha)=\langle L^{\Phi_0}(\mathbb{B}^n,d\nu_\alpha),L^{\Phi_1}(\mathbb{B}^n,d\nu_\alpha)\rangle_\rho$$
with equivalence of (quasi)-norms. In particular, $\vec{L^{\Phi}}_\rho(\mathbb{B}^n,d\nu_\alpha)$ is an interpolation space with respect to $ \left(L^{\Phi_0}(\mathbb{B}^n,d\nu_\alpha),L^{\Phi_1}(\mathbb{B}^n,d\nu_\alpha)\right)$.
\end{proposition}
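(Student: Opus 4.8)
The plan is to deduce Proposition~\ref{prop:GPinterorlicz} from the general Gustavsson--Peetre result \cite[Theorem 7.3]{Gustavsson} by verifying that, under the standing hypotheses, the pair $\left(L^{\Phi_0}(\mathbb{B}^n,d\nu_\alpha),L^{\Phi_1}(\mathbb{B}^n,d\nu_\alpha)\right)$ is an admissible couple for that theorem and that the interpolated space it produces is exactly $L^{\Phi}(\mathbb{B}^n,d\nu_\alpha)$ with $\Phi$ given by \eqref{eq:phiinterp}. The statement is labelled a ``restriction'' of the cited theorem, so the work is entirely in matching hypotheses: first I would record that each $\Phi_i\in\mathscr{U}^q$ satisfying the $\triangledown_2$-condition is, by the discussion following \eqref{eq:delta2condition} and the observation that $1<a_{\Phi_i}\le b_{\Phi_i}<\infty$ via \cite[Lemma 2.6]{DHZZ}, an Orlicz function for which $L^{\Phi_i}(\mathbb{B}^n,d\nu_\alpha)$ is a genuine Banach function space (reflexive, order-continuous norm) on the finite measure space $(\mathbb{B}^n,d\nu_\alpha)$. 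These are precisely the structural properties Gustavsson--Peetre require to apply their Calder\'on--Orlicz construction.

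Next I would identify the Calder\'on-type product space appearing in \cite[Theorem 7.3]{Gustavsson} with $L^\Phi$. The theorem expresses $\langle L^{\Phi_0},L^{\Phi_1}\rangle_\rho$ as the Orlicz space whose defining function is built from $\Phi_0,\Phi_1$ through the fundamental function $\rho$, and the standard recipe for this is exactly the relation \eqref{eq:phiinterp}, namely $\Phi^{-1}=\Phi_0^{-1}\,\rho\!\left(\Phi_1^{-1}/\Phi_0^{-1}\right)$. So I would check that the $\Phi$ defined by \eqref{eq:phiinterp} is itself a growth function of the admissible class --- using Proposition~\ref{phiandinverse} and the closure properties of $\mathscr{U}^q$ under the operations $\Phi_i^{-1}\in\mathscr{U}^{1/q_i}$ combined with a pseudo-concave $\rho\in\mathcal{T}^{+-}$ --- so that $L^\Phi(\mathbb{B}^n,d\nu_\alpha)$ makes sense and carries a quasi-norm. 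The equivalence of (quasi)-norms is then inherited directly from the cited theorem, and the final ``in particular'' clause is immediate from Proposition~\ref{prop:functorinterp}, which already asserts that the functor $\langle\,\cdot\,\rangle_\rho$ is an interpolation functor; one simply instantiates it at the present couple.

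The main obstacle I anticipate is \emph{not} the interpolation machinery but the bookkeeping needed to confirm that $\Phi$ from \eqref{eq:phiinterp} lands back in the class for which $L^\Phi$ is well defined and for which the identification in \cite[Theorem 7.3]{Gustavsson} is stated. Concretely, one must verify that $\Phi^{-1}$ inherits the right upper-type/pseudo-concavity behaviour from the product $\Phi_0^{-1}\rho(\Phi_1^{-1}/\Phi_0^{-1})$; since $\rho\in\mathcal{T}^{+-}$ guarantees the strict ``$o(\max(1,\lambda))$'' decay at $0$ and $\infty$, this is what forces $\Phi$ to still satisfy the $\triangledown_2$-condition (equivalently $1<a_\Phi\le b_\Phi<\infty$), and checking this rigorously is the delicate point. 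Once that index computation is in hand, Proposition~\ref{prop:GPinterorlicz} follows by direct citation, and I would present the argument in that order: reduce to \cite[Theorem 7.3]{Gustavsson}, verify admissibility of the couple, identify the Calder\'on product with $L^\Phi$ via the index bookkeeping, and invoke Proposition~\ref{prop:functorinterp} for the concluding interpolation-space assertion.
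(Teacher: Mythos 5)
Your proposal matches the paper exactly: the paper gives no independent proof of Proposition~\ref{prop:GPinterorlicz}, presenting it only as ``a restriction of \cite[Theorem 7.3]{Gustavsson} to the class of growth functions we are interested in and our spaces,'' which is precisely the reduction you describe. Your additional bookkeeping (verifying that the couple is admissible and that the $\Phi$ of (\ref{eq:phiinterp}) stays in the right class --- a point the paper itself only notes afterwards, observing that $\Phi$ is of upper type $q>1$) is consistent with, and if anything more careful than, what the paper records.
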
  
It is easy to check using the definition of a pseudo-concave function, that given $\Phi_0,\Phi_1\in \mathscr{U}^q$, the growth function $\Phi$ defined by (\ref{eq:phiinterp}) is of upper type $q>1$.

We can now state our main result of this section.
\begin{theorem}\label{thm:GPBergOrlicz}
Let $\Phi_i\in \mathscr{U}^q$, $i=0,1$ satisfying the $\triangledown_2-$condition, and let $\alpha>-1$. Assume that $\rho\in \mathcal{T}^{+-}$ and let $\Phi$ be defined as in (\ref{eq:phiinterp}). Then $$\mathcal{A}_\alpha^{\Phi}(\mathbb{B}^n)=\langle \mathcal{A}_\alpha^{\Phi_0}(\mathbb{B}^n),\mathcal{A}_\alpha^{\Phi_1}(\mathbb{B}^n)\rangle_\rho$$
with equivalence of (quasi)-norms.
\end{theorem}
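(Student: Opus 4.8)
The plan is to realize the Bergman--Orlicz couple $(\mathcal{A}_\alpha^{\Phi_0}(\mathbb{B}^n),\mathcal{A}_\alpha^{\Phi_1}(\mathbb{B}^n))$ as a \emph{retract} of the ambient Orlicz couple $(L^{\Phi_0}(\mathbb{B}^n,d\nu_\alpha),L^{\Phi_1}(\mathbb{B}^n,d\nu_\alpha))$ via the Bergman projection $P_\alpha$, and then to transport the known interpolation identity for the Orlicz couple (Proposition \ref{prop:GPinterorlicz}) to the holomorphic subspaces through the functoriality of $\langle\cdot,\cdot\rangle_\rho$ (Proposition \ref{prop:functorinterp}).

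First I would set up the retraction. Since each $\Phi_i\in\mathscr{U}^q$ satisfies the $\triangledown_2$-condition, Theorem \ref{thm:bdednessbergproj} shows that $P_\alpha$ is bounded on each $L^{\Phi_i}(\mathbb{B}^n,d\nu_\alpha)$; write $j_i:\mathcal{A}_\alpha^{\Phi_i}(\mathbb{B}^n)\hookrightarrow L^{\Phi_i}(\mathbb{B}^n,d\nu_\alpha)$ for the isometric inclusion. The range of $P_\alpha$ on $L^{\Phi_i}$ is precisely $\mathcal{A}_\alpha^{\Phi_i}$: every $P_\alpha g$ is holomorphic and, by boundedness, lies in $L^{\Phi_i}$, while conversely $P_\alpha$ reproduces each holomorphic $f\in\mathcal{A}_\alpha^{\Phi_i}$. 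The reproducing identity $P_\alpha f=f$ is the one genuinely analytic point: it holds on $\mathcal{A}_\alpha^2$ and extends to $\mathcal{A}_\alpha^1$, and since $\Phi_i\in\mathscr{U}^q$ forces $\mathcal{A}_\alpha^{\Phi_i}\hookrightarrow\mathcal{A}_\alpha^1$ (as used in the proof of Lemma \ref{lem:invargradestim}), with the pointwise bound of Lemma \ref{lem:pointwiseestimate} guaranteeing absolute convergence of the reproducing integral, it holds throughout $\mathcal{A}_\alpha^{\Phi_i}$. Thus $P_\alpha j_i=\mathrm{id}$ on $\mathcal{A}_\alpha^{\Phi_i}$ for $i=0,1$.

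Next I would apply the interpolation functor. By Proposition \ref{prop:functorinterp} the inclusion induces a bounded map
\[
j:\langle\mathcal{A}_\alpha^{\Phi_0}(\mathbb{B}^n),\mathcal{A}_\alpha^{\Phi_1}(\mathbb{B}^n)\rangle_\rho\longrightarrow\langle L^{\Phi_0}(\mathbb{B}^n,d\nu_\alpha),L^{\Phi_1}(\mathbb{B}^n,d\nu_\alpha)\rangle_\rho,
\]
and $P_\alpha$ induces a bounded map $P$ in the reverse direction. Since $P_\alpha j_i=\mathrm{id}$ on each endpoint space, functoriality yields $Pj=\mathrm{id}$ on $\langle\mathcal{A}_\alpha^{\Phi_0},\mathcal{A}_\alpha^{\Phi_1}\rangle_\rho$; hence $j$ is an isomorphism onto its image and $jP$ is a bounded idempotent on the ambient interpolate. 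Identifying that interpolate with $L^{\Phi}(\mathbb{B}^n,d\nu_\alpha)$ by Proposition \ref{prop:GPinterorlicz} (the remark preceding it ensures $\Phi\in\mathscr{U}^q$), the idempotent $jP$ coincides with the Bergman projection $P_\alpha$ acting on $L^{\Phi}$, and its range there is exactly $\mathcal{A}_\alpha^{\Phi}(\mathbb{B}^n)$ by the same range computation as above. Therefore $\langle\mathcal{A}_\alpha^{\Phi_0},\mathcal{A}_\alpha^{\Phi_1}\rangle_\rho$ and $\mathcal{A}_\alpha^{\Phi}(\mathbb{B}^n)$ are isomorphic with equivalent quasi-norms, which is the desired conclusion.

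The main obstacle I anticipate is not the functorial bookkeeping, which becomes formal once the retraction is secured, but the two analytic points underpinning it: that $P_\alpha$ reproduces every element of $\mathcal{A}_\alpha^{\Phi_i}$ and that its range on $L^{\Phi}$ is precisely $\mathcal{A}_\alpha^{\Phi}$. Both hinge on the pointwise growth estimate of Lemma \ref{lem:pointwiseestimate} and the embedding into $\mathcal{A}_\alpha^1$ to make the defining integral of $P_\alpha$ absolutely convergent. One must also check the compatibility condition required by the Gustavsson--Peetre construction, namely that $\mathcal{A}_\alpha^{\Phi_i}$ and $L^{\Phi_i}$ sit inside a common Hausdorff space in a way that lets $j$ and $P_\alpha$ act between the matching pairs of ambient spaces, so that Proposition \ref{prop:functorinterp} genuinely applies.
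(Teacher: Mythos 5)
Your argument is correct, but for the harder inclusion it takes a genuinely different route from the paper. The inclusion $\mathcal{A}_\alpha^{\Phi}\subset\langle\mathcal{A}_\alpha^{\Phi_0},\mathcal{A}_\alpha^{\Phi_1}\rangle_\rho$ is handled the same way in both proofs: boundedness of $P_\alpha$ on the endpoint Orlicz spaces (Theorem \ref{thm:bdednessbergproj}), functoriality (Proposition \ref{prop:functorinterp}), the identification of Proposition \ref{prop:GPinterorlicz}, and the surjectivity $P_\alpha\bigl(L^{\Phi}(\mathbb{B}^n,d\nu_\alpha)\bigr)=\mathcal{A}_\alpha^{\Phi}(\mathbb{B}^n)$. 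For the converse the paper does not use the retraction at all: it applies the operator $Lf(z)=(1-|z|^2)\mathcal{R}f(z)$, which by Lemma \ref{lem:normdominatedbyradialderiv} is bounded from $\mathcal{A}_\alpha^{\Phi_i}$ into $L^{\Phi_i}(\mathbb{B}^n,d\nu_\alpha)$, interpolates it, and then invokes Theorem \ref{thm:main} to convert membership of $(1-|z|^2)\mathcal{R}f$ in $L^{\Phi}$ back into $f\in\mathcal{A}_\alpha^{\Phi}$; this is precisely the advertised application of the derivative characterization. You instead run the classical retract argument: $P_\alpha j_i=\mathrm{id}$ on each $\mathcal{A}_\alpha^{\Phi_i}$ (via the reproducing formula on $\mathcal{A}_\alpha^1$ and the embedding $\mathcal{A}_\alpha^{\Phi_i}\hookrightarrow\mathcal{A}_\alpha^1$), so the interpolate of the holomorphic couple is carried by the idempotent $jP=P_\alpha$ onto $\mathcal{A}_\alpha^{\Phi}$ inside $L^{\Phi}$. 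This is sound: the reproducing identity on $\mathcal{A}_\alpha^1$ is standard, and the boundedness of $jP$ on $L^{\Phi}$ comes for free from the interpolation property rather than from a $\triangledown_2$ hypothesis on $\Phi$ itself. It is arguably the more economical proof, since it bypasses Theorem \ref{thm:main} entirely; what it does not do is exhibit the result as a consequence of the derivative characterization, which is the paper's purpose in this section. Both proofs rest on the same implicit step, namely $P_\alpha(L^{\Phi})=\mathcal{A}_\alpha^{\Phi}$, so you are on equal footing there.
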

\begin{proof}
As $\Phi_0$ and $\Phi_1$ satisfy the $\triangledown_2-$condition, we have from Theorem \ref{thm:bdednessbergproj} that the Bergman projection $P_\alpha$ maps $L^{\Phi_0}(\mathbb{B}^n, d\nu_\alpha)$ boundedly onto $\mathcal{A}_\alpha^{\Phi_0}(\mathbb{B}^n)$, and it maps $L^{\Phi_1}(\mathbb{B}^n,d\nu_\alpha)$ boundedly onto $\mathcal{A}_\alpha^{\Phi_1}(\mathbb{B}^n)$. It follows from Proposition \ref{prop:functorinterp} that $P_\alpha$ maps $L^{\Phi}(\mathbb{B}^n, d\nu_\alpha)=\langle L^{\Phi_0}(\mathbb{B}^n, d\nu_\alpha),L^{\Phi_1}(\mathbb{B}^n, d\nu_\alpha)\rangle_\rho$ boundedly into $\langle \mathcal{A}_\alpha^{\Phi_0}(\mathbb{B}^n),\mathcal{A}_\alpha^{\Phi_1}(\mathbb{B}^n)\rangle_\rho$. As $P_\alpha\left(L^{\Phi}(\mathbb{B}^n, d\nu_\alpha)\right)=\mathcal{A}_\alpha^{\Phi}(\mathbb{B}^n)$, we conclude that 

\Be\label{eq:incluinterpberg}\mathcal{A}_\alpha^{\Phi}(\mathbb{B}^n)\subset \langle \mathcal{A}_\alpha^{\Phi_0}(\mathbb{B}^n),\mathcal{A}_\alpha^{\Phi_1}(\mathbb{B}^n)\rangle_\rho.
\Ee
Conversely, if we denote by $L$ the operator defined on $H(\mathbb{B}^n)$ by $$L(f)(z):=(1-|z|^2)\mathcal{R}f(z),\,\,\,z\in \mathbb{B}^n,$$
then following Theorem \ref{thm:main} and specially Lemma \ref{lem:normdominatedbyradialderiv}, we have that $L$ maps $\mathcal{A}_\alpha^{\Phi_0}(\mathbb{B}^n)$ boundedly into $L^{\Phi_0}(\mathbb{B}^n, d\nu_\alpha)$,  and it maps  $\mathcal{A}_\alpha^{\Phi_1}(\mathbb{B}^n)$ into $L^{\Phi_1}(\mathbb{B}^n, d\nu_\alpha)$. It follows once more from Proposition \ref{prop:functorinterp} that $L$ maps $\langle \mathcal{A}_\alpha^{\Phi_0}(\mathbb{B}^n),\mathcal{A}_\alpha^{\Phi_1}(\mathbb{B}^n)\rangle_\rho$ into $L^{\Phi}(\mathbb{B}^n, d\nu_\alpha)$. That is if $f\in \langle \mathcal{A}_\alpha^{\Phi_0}(\mathbb{B}^n),\mathcal{A}_\alpha^{\Phi_1}(\mathbb{B}^n)\rangle_\rho$, then the function $z\mapsto (1-|z|^2)\mathcal{R}f(z)$ belongs to $L^{\Phi}(\mathbb{B}^n, d\nu_\alpha)$, which by Theorem \ref{thm:main} is equivalent to saying that $f\in \mathcal{A}_\alpha^{\Phi}(\mathbb{B}^n)$. We deduce that 
\Be\label{eq:incluinverinterpberg}\langle \mathcal{A}_\alpha^{\Phi_0}(\mathbb{B}^n),\mathcal{A}_\alpha^{\Phi_1}(\mathbb{B}^n)\rangle_\rho\subset \mathcal{A}_\alpha^{\Phi}(\mathbb{B}^n).
\Ee
From (\ref{eq:incluinterpberg}) and (\ref{eq:incluinverinterpberg}), we conclude that 
$$\langle \mathcal{A}_\alpha^{\Phi_0}(\mathbb{B}^n),\mathcal{A}_\alpha^{\Phi_1}(\mathbb{B}^n)\rangle_\rho= \mathcal{A}_\alpha^{\Phi}(\mathbb{B}^n).$$
The proof is complete.
\end{proof}
Restricting to power functions, we deduce the following.
\begin{corollary}\label{cor:GPBergOrlicz}
Let $1\le p_0<p_1<\infty$, and let $\alpha>-1$. Assume that $\rho\in \mathcal{T}^{+-}$ and let $\Phi$ be defined by $\Phi^{-1}(t)=t^{\frac{1}{p_0}}\rho(t^{\frac{1}{p_1}-\frac{1}{p_0}})$. Then $$\mathcal{A}_\alpha^{\Phi}(\mathbb{B}^n)=\langle \mathcal{A}_\alpha^{p_0}(\mathbb{B}^n),\mathcal{A}_\alpha^{p_1}(\mathbb{B}^n)\rangle_\rho$$
with equivalence of (quasi)-norms.
\end{corollary}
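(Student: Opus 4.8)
The plan is to obtain Corollary~\ref{cor:GPBergOrlicz} as the specialization of Theorem~\ref{thm:GPBergOrlicz} to power functions. First I would set $\Phi_i(t)=t^{p_i}$ for $i=0,1$ and check the hypotheses of the theorem: for $p_i\ge 1$ the function $t^{p_i}$ is convex and $t\mapsto t^{p_i}/t=t^{p_i-1}$ is non-decreasing, so $\Phi_i\in\mathscr{U}^{p_i}$; moreover, directly from the definitions, $L^{\Phi_i}(\mathbb{B}^n,d\nu_\alpha)=L^{p_i}(\mathbb{B}^n,d\nu_\alpha)$ and hence $\mathcal{A}_\alpha^{\Phi_i}(\mathbb{B}^n)=\mathcal{A}_\alpha^{p_i}(\mathbb{B}^n)$ with equivalent (quasi)-norms.

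Next I would identify the interpolation parameter produced by the theorem with the one named in the corollary. Since $\Phi_i^{-1}(t)=t^{1/p_i}$, substitution into the defining relation (\ref{eq:phiinterp}) gives
\Be
\Phi^{-1}(t)=\Phi_0^{-1}(t)\,\rho\!\left(\frac{\Phi_1^{-1}(t)}{\Phi_0^{-1}(t)}\right)=t^{1/p_0}\,\rho\!\left(t^{\frac{1}{p_1}-\frac{1}{p_0}}\right),
\Ee
which is exactly the function $\Phi$ appearing in the statement. With the hypotheses verified and the parameter identified, Theorem~\ref{thm:GPBergOrlicz} immediately yields $\mathcal{A}_\alpha^{\Phi}(\mathbb{B}^n)=\langle\mathcal{A}_\alpha^{p_0}(\mathbb{B}^n),\mathcal{A}_\alpha^{p_1}(\mathbb{B}^n)\rangle_\rho$ with equivalence of (quasi)-norms.

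The one point requiring care, and the main obstacle, is the endpoint $p_0=1$ admitted by the corollary but not covered by Theorem~\ref{thm:GPBergOrlicz} as stated. For $1<p_0<p_1<\infty$ both power functions satisfy the $\triangledown_2$-condition, since $a_{\Phi_i}=b_{\Phi_i}=p_i>1$, and the theorem applies verbatim. When $p_0=1$, however, $\Phi_0(t)=t$ violates $\triangledown_2$: its complementary function is degenerate, and correspondingly the Bergman projection $P_\alpha$ is unbounded on $L^1(\mathbb{B}^n,d\nu_\alpha)$. Consequently the forward inclusion (\ref{eq:incluinterpberg}), which in the proof of Theorem~\ref{thm:GPBergOrlicz} is obtained by feeding $P_\alpha$ into the interpolation functor of Proposition~\ref{prop:functorinterp}, cannot be deduced as it stands.

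To settle $p_0=1$ I would first observe that $\rho\in\mathcal{T}^{+-}$ is precisely what keeps the interpolate strictly inside the endpoints: the little-$o$ condition as $x\to 0$ forces $\rho$ to have strictly positive lower dilation index, which through (\ref{eq:phiinterp}) yields $a_\Phi>1$, so that $\Phi$ itself satisfies $\triangledown_2$ and $P_\alpha$ is bounded on $L^{\Phi}(\mathbb{B}^n,d\nu_\alpha)$ by Theorem~\ref{thm:bdednessbergproj}. The reverse inclusion (\ref{eq:incluinverinterpberg}) then survives unchanged, since it uses only the operator $L(f)=(1-|z|^2)\mathcal{R}f$, whose boundedness from $\mathcal{A}_\alpha^{1}$ into $L^{1}$ is furnished by Theorem~\ref{thm:main} (valid since $t\in\mathscr{U}^1$). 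For the forward inclusion one must avoid the unbounded endpoint altogether: because $1<a_\Phi\le b_\Phi<\infty$, one can represent the same $\Phi$ as the interpolate of an interior pair $(L^{q_0},L^{q_1})$ with $1<q_0<a_\Phi\le b_\Phi<q_1<\infty$ and a suitable $\sigma\in\mathcal{T}^{+-}$, and then transport the already-proven interior identity to the endpoint couple by a stability argument asserting that the interpolate depends only on $\Phi$. Establishing this reduction rigorously, namely verifying that $\rho\in\mathcal{T}^{+-}$ genuinely forces $a_\Phi>1$ and that the Bergman and the $L^p$ couples enjoy the required independence of admissible endpoints, is where the real work lies.
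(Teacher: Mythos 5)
Your proposal is correct and follows the paper's own (implicit) proof exactly: the paper obtains the corollary simply by ``restricting to power functions'' in Theorem~\ref{thm:GPBergOrlicz}, which is precisely your first two paragraphs (taking $\Phi_i(t)=t^{p_i}$, noting $a_{\Phi_i}=b_{\Phi_i}=p_i$, and computing $\Phi^{-1}$ from (\ref{eq:phiinterp})). Your further observation that the endpoint $p_0=1$ is not actually covered by the theorem's hypotheses --- since $\Phi_0(t)=t$ fails the $\triangledown_2$-condition and $P_\alpha$ is unbounded on $L^1(\mathbb{B}^n,d\nu_\alpha)$, so the inclusion (\ref{eq:incluinterpberg}) does not follow as stated --- points to a genuine imprecision in the corollary that the paper silently passes over; your sketched repair for that endpoint is not completed, but for $1<p_0$ your argument is complete and identical to the paper's.
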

Note that the above corollary tells us that given two classical Bergman spaces with the same weight, their Gustavsson-Peetre interpolation space is in general a Bergman-Orlicz space while the complex interpolation of weighted Bergman spaces always gives another weighted Bergman space (see \cite{KZ}). 

\subsection{Boundedness and compactness of weighted Ces\`aro-type integrals}
\vskip .2cm
For $g\in \mathcal H(\mathbb B^n)$ with $g(0)=0$, we consider the following integral-type operator defined on $\mathcal H(\mathbb B^n)$ by
%$$T_gf(z)=\int_0^1f(tz)Rg(tz)\frac{dt}{t}.$$
%where $Rf$ is the radial derivative of $f$ defined by
%$$Rf(z)=\sum_{j=0}^n z_j\frac{\partial f}{\partial z_j}(z).$$
%The operator $T_g$ was introduced in \cite{Hu}. Combining it with the composition operator, we obtain the following integral-type operator
%$$T_{u,\varphi}^gf(z)=\int_0^1u(tz)f(\varphi(tz))Rg(tz)\frac{dt}{t}.$$

%\bigskip
%{\bf Comment. In my paper \cite{Stevic0} I introduced the operator
$$T_g f(z)=\int_0^1f(tz)\mathcal{R}g(tz)\frac{dt}{t}.$$
The  operator $T_g$ is the so-called extended Ces\`aro operator introduced in \cite{Hu1}. The boundedness and compactness of $T_g$ on the weighted Bergman space $\mathcal{A}_\alpha^p(\mathbb{B}^n)$ were studied by J. Xiao \cite{Xiao}. In \cite{LiStevic} the same questions between different weighted Bergman spaces were studied. Note that Z. Hu also considered the boundedness and compactness of $T_g$ between weighted Bergman  spaces for a large class of weights \cite{Hu3}. 
%studied between various homomorphic function spaces by several authors. We refer to the few following and the references therein \cite{HuZ, HuZJ, LiStev1, LTang, Tang, YangC}.
\vskip .2cm
We aim in this section to characterize symbols $g$ such that $T_g$ is a bounded or compact operator from a weighted Bergman-Orlicz space to itself.
\vskip .2cm

We first prove an estimate for derivative of functions in Bergman-Orlicz spaces.
\blem\label{lem:pointwiseforderivatives}
Let $\Phi\in \mathscr{L}_p\cup \mathscr{U}^q$ and $\alpha>-1$. Then there are two positive constants $C_1$ and $C_2$ such that for any  $f\in \mathcal A_{\alpha}^{\Phi}(\mathbb B^n)$,
\Be\label{eq:pointwiseforderivatives}
|\nabla f(z)|\le \frac{C_1}{1-|z|^2}\Phi^{-1}\left(\frac{C_2}{(1-|z|^2)^{n+1+\alpha}}\right)\|f\|_{\alpha,\Phi}^{lux}, \,\,\,\textrm{for any}\,\,\,z\in \mathbb{B}^n.
\Ee
\elem
\begin{proof}
Let us start by considering the case where $\Phi\in \mathscr{U}^q$. We observe that in this case, $\mathcal A_{\alpha}^{\Phi}(\mathbb B^n)$ continuously embeds into $\mathcal A_{\alpha}^{1}(\mathbb B^n)$. Hence, for any $f\in \mathcal A_{\alpha}^{\Phi}(\mathbb B^n)$, and any $z\in \mathbb{B}^n$,
$$f(z)=\int_{\mathbb{B}^n}\frac{f(w)}{(1-\langle z,w\rangle)^{n+1+\alpha}}d\nu_\alpha(w).$$
Thus for any $j=1,\ldots,n$,
$$\frac{\partial f}{\partial z_j}(z)=c\int_{\mathbb{B}^n}\frac{\overline{w}_jf(w)}{(1-\langle z,w\rangle)^{n+2+\alpha}}d\nu_\alpha(w).$$
It follows easily that
$$\frac{1-|z|^2}{\|f\|_{\alpha,\Phi}^{lux}}|\frac{\partial f}{\partial z_j}(z)|\le C\int_{\mathbb{B}^n}\frac{|f(w)|}{\|f\|_{\alpha,\Phi}^{lux}}\frac{1-|z|^2}{(1-\langle z,w\rangle)^{n+2+\alpha}}d\nu_\alpha(w).$$
It is easy to see using \cite[Proposition 1.4.10]{R} that $\frac{1-|z|^2}{(1-\langle z,w\rangle)^{n+2+\alpha}}d\nu_\alpha(w)$ is up to a constant a probability measure. Hence using the convexity of $\Phi$ and  Jensen's Inequality, we obtain
\Beas \Phi\left(\frac{1-|z|^2}{\|f\|_{\alpha,\Phi}^{lux}}\left|\frac{\partial f}{\partial z_j}(z)\right|\right) &\le& C\int_{\mathbb{B}^n}\Phi\left(\frac{|f(w)|}{\|f\|_{\alpha,\Phi}^{lux}}\right)\frac{1-|z|^2}{(1-\langle z,w\rangle)^{n+2+\alpha}}d\nu_\alpha(w)\\ &\le& \frac{C}{(1-|z|^2)^{n+1+\alpha}}\int_{\mathbb{B}^n}\Phi\left(\frac{|f(w)|}{\|f\|_{\alpha,\Phi}^{lux}}\right)d\nu_\alpha(w)\\ &\le& \frac{C}{(1-|z|^2)^{n+1+\alpha}}.
\Eeas

Hence
$$\left|\frac{\partial f}{\partial z_j}(z)\right|\le \frac{1}{1-|z|^2}\Phi^{-1}\left(\frac C{(1-|z|^2)^{n+1+\alpha}}\right)\|f\|_{\alpha,\Phi}^{lux}, \,\,\,\textrm{for any}\,\,\,z\in \mathbb{B}^n$$
from which follows (\ref{eq:pointwiseforderivatives}).
\vskip .3cm
We now consider the case where $\Phi\in \mathscr{L}_p$. We recall that in this case $\Phi$ is of lower type $0<p\le 1$. Let $\beta>-1$ be large enough (this will be more precise in the next lines). As above, we have that
\Be\label{eq:modulederivestim}\left|\frac{\partial f}{\partial z_j}(z)\right|\le C\int_{\mathbb{B}^n}\frac{|f(w)|}{|1-\langle z,w\rangle|^{n+2+\beta}}d\nu_\beta(w).\Ee
We assume that $\beta=\frac{n+1+\gamma}{p}-(n+1)$ with $\gamma>\alpha+p$. Then using Lemma \ref{lem:smallestim}, we obtain from (\ref{eq:modulederivestim}) that
\Be \left|\frac{\partial f}{\partial z_j}(z)\right|^p\le C\int_{\mathbb{B}^n}\left|\frac{f(w)}{(1-\langle z,w\rangle)^{n+2+\beta}}\right|^pd\nu_\gamma(w)
\Ee
or equivalently,
\Be\label{eq:applismallexpo}\left|\frac{1-|z|^2}{\|f\|_{\alpha,\Phi}^{lux}}\frac{\partial f}{\partial z_j}(z)\right|^p\le C\int_{\mathbb{B}^n}\left|\frac{f(w)}{\|f\|_{\alpha,\Phi}^{lux}}\right|^p\frac{(1-|z|^2)^p}{|1-\langle z,w\rangle)|^{(n+2+\beta)p}}d\nu_\gamma(w).
\Ee
One easily checks that $\frac{(1-|z|^2)^p}{|1-\langle z,w\rangle)|^{(n+2+\beta)p}}d\nu_\gamma(w)$ is up to a constant, a probability measure. Hence using that the function $\Phi_p:t\mapsto \Phi_p(t)=\Phi(t^{\frac{1}{p}})$ is convex and Jensen's Inequality, we obtain that
$$\Phi_p\left(\left|\frac{1-|z|^2}{\|f\|_{\alpha,\Phi}^{lux}}\frac{\partial f}{\partial z_j}(z)\right|^p\right)\le C\int_{\mathbb{B}^n}\Phi_p\left(\left|\frac{f(w)}{\|f\|_{\alpha,\Phi}^{lux}}\right|^p\right)\frac{(1-|z|^2)^p}{|1-\langle z,w\rangle)|^{(n+2+\beta)p}}d\nu_\gamma(w).$$
Hence
\Beas\Phi\left(\left|\frac{1-|z|^2}{\|f\|_{\alpha,\Phi}^{lux}}\frac{\partial f}{\partial z_j}(z)\right|\right) &\le& C\int_{\mathbb{B}^n}\Phi\left(\left|\frac{f(w)}{\|f\|_{\alpha,\Phi}^{lux}}\right|\right)\frac{(1-|z|^2)^p}{|1-\langle z,w\rangle)|^{(n+2+\beta)p}}d\nu_\gamma(w)\\ &=& C\int_{\mathbb{B}^n}\Phi\left(\left|\frac{f(w)}{\|f\|_{\alpha,\Phi}^{lux}}\right|\right)\frac{(1-|z|^2)^p}{|1-\langle z,w\rangle)|^{n+1+\gamma+p}}d\nu_\gamma(w)\\ &\le& \frac{C}{(1-|z|^2)^{n+1+\alpha}}\int_{\mathbb{B}^n}\Phi\left(\left|\frac{f(w)}{\|f\|_{\alpha,\Phi}^{lux}}\right|\right)d\nu_\alpha(w)\\ &\le& \frac{C}{(1-|z|^2)^{n+1+\alpha}}.
\Eeas
That is $$\left|\frac{\partial f}{\partial z_j}(z)\right|\le \frac{1}{1-|z|^2}\Phi^{-1}\left(\frac C{(1-|z|^2)^{n+1+\alpha}}\right)\|f\|_{\alpha,\Phi}^{lux}, \,\,\,\textrm{for any}\,\,\,z\in \mathbb{B}^n$$
from which follows (\ref{eq:pointwiseforderivatives}).
The proof is complete. 
\end{proof}
We can now prove the following.
\btheo\label{thm:bdcesaro}
Let $\Phi\in \mathscr{L}_p\cup \mathscr{U}^q$ and $\alpha>-1$. Assume $g$ is a holomorphic function on $\mathbb{B}^n$ with $g(0)=0$. Then the operator $T_g$ is bounded on $\mathcal{A}_\alpha^\Phi(\mathbb{B}^n)$ if and only if
 
\Be\label{eq:blochcondi}
M:=\sup_{z\in \mathbb{B}^n}(1-|z|^2)|\mathcal{R}g(z)|<\infty.
\Ee

Moreover, if we denote by $\|T_g\|$ the operator norm of $T_g$, then $$\|T_g\|\backsim M.$$
\etheo
\begin{proof}
Let us first assume that (\ref{eq:blochcondi}) holds. Then
\Beas
I &:=& \int_{\mathbb{B}^n}\Phi\left(\frac{(1-|z|^2)|\mathcal{R}T_gf(z)|}{M\|f\|_{\Phi,\alpha}^{lux}}\right)d\nu_\alpha(z)\\ &=& \int_{\mathbb{B}^n}\Phi\left(\frac{(1-|z|^2)|\mathcal{R}g(z)|}{M}\frac{|f(z)|}{\|f\|_{\Phi,\alpha}^{lux}}\right)d\nu_\alpha(z)\\ &\le& \int_{\mathbb{B}^n}\Phi\left(\frac{|f(z)|}{\|f\|_{\Phi,\alpha}^{lux}}\right)d\nu_\alpha(z)\le 1.
\Eeas
Hence from Theorem \ref{thm:main}, we deduce that $T_gf\in \mathcal{A}_\alpha^\Phi(\mathbb{B}^n)$ for any $f\in \mathcal{A}_\alpha^\Phi(\mathbb{B}^n)$. Moreover, from Lemma \ref{lem:normdominatedbyradialderiv}, we deduce that 
$$\|T_gf\|_{\Phi,\alpha}^{lux}\le M\|f\|_{\Phi,\alpha}^{lux}$$
for any $f\in \mathcal{A}_\alpha^\Phi(\mathbb{B}^n)$. It follows that $$\|T_g\|\le M.$$
Conversely, let us assume that for $g\in H(\mathbb{B}^n)$ with $g(0)=0$, $T_g$ is bounded on $\mathcal{A}_\alpha^\Phi(\mathbb{B}^n)$. Then from Lemma \ref{lem:pointwiseforderivatives} we have that there is a constant $C>0$ such that for any $f\in \mathcal{A}_\alpha^\Phi(\mathbb{B}^n)$ and any $z\in \mathbb{B}^n$,
$$(1-|z|^2)|\mathcal{R}T_gf(z)|\le C\Phi^{-1}\left(\frac{1}{(1-|z|^2)^{n+1+\alpha}}\right)\|T_gf\|_{\Phi,\alpha}^{lux}$$
which leads to
\Be\label{eq:necessbd}(1-|z|^2)|\mathcal{R}g(z)||f(z)|\le C\Phi^{-1}\left(\frac{1}{(1-|z|^2)^{n+1+\alpha}}\right)\|T_g\|\|f\|_{\Phi,\alpha}^{lux}.
\Ee
Let $a\in \mathbb{B}^n$ be fixed and consider the function $f_a$ defined on $\mathbb{B}^n$ by
$$f_a(z)=\Phi^{-1}\left(\frac{1}{(1-|a|^2)^{n+1+\alpha}}\right)\left(\frac{1-|a|^2}{1-\langle z,a\rangle}\right)^{2(n+1+\alpha)}.$$
We recall with Lemma \ref{lem:testfunctionberg} that $f_a\in \mathcal{A}_\alpha^\Phi(\mathbb{B}^n)$ and $\|f_a\|_{\Phi,\alpha}^{lux}\lesssim 1$.
Let us test (\ref{eq:necessbd}) with the function $f=f_a$, for $a$ fixed. We obtain that for any $z\in \mathbb{B}^n$,
\Beas && (1-|z|^2)|\mathcal{R}g(z)|\Phi^{-1}\left(\frac{1}{(1-|a|^2)^{n+1+\alpha}}\right)\left|\frac{1-|a|^2}{1-\langle z,a\rangle}\right|^{2(n+1+\alpha)}\\ &\le& C.\Phi^{-1}\left(\frac{1}{(1-|z|^2)^{n+1+\alpha}}\right)\|T_g\|.
\Eeas
Putting $z=a$, we obtain that 
$$ (1-|a|^2)|\mathcal{R}g(a)|\le C\|T_g\|.$$
As $a$ was taken arbitrary in $\mathbb{B}^n$, we deduce that there is a constant $C>0$ such 
$$ M:=\sup_{a\in \mathbb{B}^n}(1-|a|^2)|\mathcal{R}g(a)|\le C\|T_g\|.$$
The proof is complete.
\end{proof}
Recall that that the spaces of all holomorphic functions satisfying (\ref{eq:blochcondi}) is called the Bloch space. The equivalent characterizations in Theorem \ref{thm:main} show that the Bloch space embeds continuously into $\mathcal A_\alpha^{\Phi}(\mathbb B^n)$ for any $\Phi\in \mathscr U^q\cup \mathscr L_p$.
\vskip .3cm
We next consider compactness of the operators $T_g$. For this, we need the following compactness criteria which can be proved following the usual arguments (see \cite{CM}).
\begin{lemma}\label{lem compactcriteria}
Let $\Phi\in \mathscr U^q\cup \mathscr L_p$, and let $\alpha>-1$.
Let  $g\in  H(\mathbb B^n)$ with $g(0)=0$. Suppose that  ${T_g:
 \mathcal A_\alpha^{\Phi}(\mathbb B^n)\rightarrow \mathcal A_\alpha^{\Phi}(\mathbb B^n)}$ is bounded,
then  $$T_g: \mathcal A_\alpha^{\Phi_p}(\mathbb B^n)\rightarrow \mathcal A_\alpha^{\Phi}(\mathbb B^n)$$ is compact if and only if for every sequence $(f_j)$ in
the unit ball of $\mathcal A_\alpha^{\Phi}(\mathbb B^n)$ which converges to 0 uniformly on compact subsets of $\mathbb B^n$,
one has   $$||T_g(f_j)||_{\Phi,\alpha}^{lux}\rightarrow 0\,\,\, \textrm{as}\,\,\, j\rightarrow \infty.$$
%\end{itemize}

\end{lemma}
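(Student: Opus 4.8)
The plan is to run the standard normal-families argument (as in \cite{CM}), with the pointwise estimate of Lemma \ref{lem:pointwiseestimate} serving as the bridge between norm convergence and locally uniform convergence. Throughout I shall use two elementary observations. First, for fixed $z\in\mathbb{B}^n$ the segment $\{tz:0\le t\le 1\}$ is a compact subset of $\mathbb{B}^n$. Second, since $g(0)=0$, the integrand defining $T_gf(z)=\int_0^1 f(tz)\mathcal{R}g(tz)\frac{dt}{t}$ equals $f(tz)\sum_{j}z_j\frac{\partial g}{\partial z_j}(tz)$, which is bounded in $t\in[0,1]$; hence if $f_j\to 0$ uniformly on compacta, then $T_gf_j(z)\to 0$ for every $z\in\mathbb{B}^n$.

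First I would treat necessity. Assume $T_g$ is compact and let $(f_j)$ lie in the unit ball of $\mathcal{A}_\alpha^{\Phi}(\mathbb{B}^n)$ with $f_j\to 0$ uniformly on compact subsets. Since $(f_j)$ is bounded and $T_g$ is compact, every subsequence of $(T_gf_j)$ admits a further subsequence $(T_gf_{j_k})$ converging in the (quasi-)norm $\|\cdot\|_{\Phi,\alpha}^{lux}$ to some $h\in\mathcal{A}_\alpha^{\Phi}(\mathbb{B}^n)$. By Lemma \ref{lem:pointwiseestimate}, norm convergence forces locally uniform convergence, so $T_gf_{j_k}\to h$ pointwise; but the observation above gives $T_gf_{j_k}\to 0$ pointwise, whence $h=0$. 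As every subsequence of $(\|T_gf_j\|_{\Phi,\alpha}^{lux})$ thus has a further subsequence tending to $0$, the whole sequence tends to $0$.

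For sufficiency I would show that $T_g$ sends bounded sequences to relatively compact sets. Let $(f_j)$ be in the unit ball of $\mathcal{A}_\alpha^{\Phi}(\mathbb{B}^n)$. By Lemma \ref{lem:pointwiseestimate} this family is uniformly bounded on each compact subset of $\mathbb{B}^n$, so Montel's theorem yields a subsequence $(f_{j_k})$ converging uniformly on compacta to some $f\in H(\mathbb{B}^n)$. A Fatou-type argument then shows $f\in\mathcal{A}_\alpha^{\Phi}(\mathbb{B}^n)$ with $\|f\|_{\Phi,\alpha}^{lux}\le 1$: for any $\lambda>1$ one has $\Phi(|f(z)|/\lambda)\le\liminf_k\Phi(|f_{j_k}(z)|/\lambda)$ pointwise, so Fatou's lemma gives $\int_{\mathbb{B}^n}\Phi(|f|/\lambda)\,d\nu_\alpha\le\liminf_k\int_{\mathbb{B}^n}\Phi(|f_{j_k}|/\lambda)\,d\nu_\alpha\le 1$, and letting $\lambda\downarrow 1$ yields the bound. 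Consequently $(f_{j_k}-f)$ lies in a fixed multiple of the unit ball and converges to $0$ uniformly on compacta, so the hypothesis applies and gives $\|T_g(f_{j_k}-f)\|_{\Phi,\alpha}^{lux}\to 0$, that is $T_gf_{j_k}\to T_gf$ in norm. Hence $(T_gf_j)$ has a norm-convergent subsequence and $T_g$ is compact.

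The step I expect to be most delicate is the Fatou-type verification that the locally uniform limit $f$ again lies in $\mathcal{A}_\alpha^{\Phi}(\mathbb{B}^n)$ with controlled Luxemburg (quasi-)norm, since in the lower-type case $\Phi\in\mathscr{L}_p$ one works with a genuine quasi-norm and must handle the normalization by $\lambda$ and the passage to the infimum defining $\|\cdot\|_{\Phi,\alpha}^{lux}$ with some care. The remaining ingredients—Montel's theorem, the subsequence-of-subsequence principle, and the pointwise convergence $T_gf_j\to 0$—are routine.
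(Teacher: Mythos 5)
Your proof is correct and is exactly the ``usual argument'' the paper invokes: the paper supplies no proof of this lemma, only the citation to \cite{CM}, and your normal-families/Fatou write-up (necessity via the subsequence principle plus the pointwise estimate of Lemma \ref{lem:pointwiseestimate}, sufficiency via Montel and Fatou) is precisely that standard argument. The one point you rightly flag---that in the case $\Phi\in\mathscr{L}_p$ the Luxemburg functional is only a quasi-norm, so $(f_{j_k}-f)$ sits in a fixed multiple of the unit ball via the quasi-triangle inequality---is handled correctly by homogeneity and poses no gap.
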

We have the following result.
\btheo\label{thm:compactcesaro}
Let $\Phi\in \mathscr{L}_p\cup \mathscr{U}^q$ and $\alpha>-1$. Assume $g$ is a holomorphic function on $\mathbb{B}^n$. Then the operator $T_g: \mathcal{A}_\alpha^\Phi(\mathbb{B}^n)\rightarrow \mathcal{A}_\alpha^\Phi(\mathbb{B}^n)$ is compact if and only if
 
\Be\label{eq:littleblochcondi}
\lim_{|z|\rightarrow 1}(1-|z|^2)|\mathcal{R}g(z)|=0.
\Ee

\etheo
\begin{proof}
Let us first assume that $T_g$ is compact. Let $\{a_j\}_{j\in \mathbb{N}}$ be a sequence in $\mathbb{B}^n$ such that $\lim_{j\rightarrow \infty}|a_j|=1$. Consider the sequence of holomorphic functions on $\mathbb{B}^n$ given by 
$$f_j(z)=\Phi^{-1}\left(\frac{1}{(1-|a_j|^2)^{n+1+\alpha}}\right)\left(\frac{1-|a_j|^2}{1-\langle z,a_j\rangle}\right)^{k(n+1+\alpha)}$$ 
with $k>1$ to be precised where needed. From Lemma \ref{lem:testfunctionberg}, we know that the sequence $\{f_j\}_{n\in \mathbb{N}}$ is uniformly bounded in $\mathcal{A}_\alpha^\Phi(\mathbb{B}^n)$. Also, we have that if $\Phi\in \mathscr{U}^q$, then as $\Phi^{-1}$ is concave and as $k>1$,
$$|f_j(z)|\le \frac{(1-|a_j|^2)^{(k-1)(n+1+\alpha)}}{|1-\langle z,a_j\rangle|^{k(n+1+\alpha)}}\rightarrow 0$$ 
as $j\rightarrow \infty$ on compact subsets of $\mathbb{B}^n$. If $\Phi\in \mathscr{L}_p$, then as $\Phi^{-1}\in \mathscr{U}^{\frac{1}{p}}$, taking $k>\frac{1}{p}$, we obtain using (\ref{uppertype}) that
$$|f_j(z)|\le C\frac{(1-|a_j|^2)^{(k-\frac{1}{p})(n+1+\alpha)}}{|1-\langle z,a_j\rangle|^{k(n+1+\alpha)}}\rightarrow 0$$
as $j\rightarrow \infty$ on compact subsets of $\mathbb{B}^n$.

Using Lemma \ref{lem:pointwiseforderivatives}, we obtain that there is a constant $C>0$ such that each $j\in \mathbb{N}$, and for any $z\in \mathbb{B}^n$,
%\Beas
$$ (1-|z|^2)|\mathcal{R}T_gf_j(z)| \le C\Phi^{-1}\left(\frac{1}{(1-|z|^2)^{n+1+\alpha}}\right)||T_g(f_j)||_{\Phi,\alpha}^{lux}$$
%\Eeas
or equivalently, 
\Beas && (1-|z|^2)\Phi^{-1}\left(\frac{1}{(1-|a_j|^2)^{n+1+\alpha}}\right)\left|\frac{1-|a_j|^2}{1-\langle z,a_j\rangle}\right|^{k(n+1+\alpha)}|\mathcal{R}g(z)|\\ &\le& C\Phi^{-1}\left(\frac{1}{(1-|z|^2)^{n+1+\alpha}}\right)||T_g(f_j)||_{\Phi,\alpha}^{lux}.
\Eeas
Putting in particular $z=a_j$, we obtain
$$(1-|a_j|^2)|\mathcal{R}g(a_j)|\le C||T_g(f_j)||_{\Phi,\alpha}^{lux}.$$
As $||T_g(f_j)||_{\Phi,\alpha}^{lux}\rightarrow 0$ as $j\rightarrow \infty$, we deduce that 
$$\lim_{j\rightarrow \infty}(1-|a_j|^2)|\mathcal{R}g(a_j)|=0$$
which leads to 
$$\lim_{|z|\rightarrow 1}(1-|z|^2)|\mathcal{R}g(z)|=0.$$
Conversely, let us assume that the holomorphic function $g$ satisfies (\ref{eq:littleblochcondi}). Note that this implies that $g\in \mathcal{A}_\alpha^\Phi(\mathbb{B}^n)$ and that for any $\varepsilon>0 $, there exists $\eta$ such that 
\Be\label{eq:littleblochcut}
(1-|z|^2)|\mathcal{R}g(z)|<\varepsilon
\Ee
for any $z\in \mathbb{B}^n$ such that $\eta<|z|<1$. 

Let us start by proving that $T_g$ is bounded on $\mathcal{A}_\alpha^\Phi(\mathbb{B}^n)$. Let $$K=\max\{1,C\Phi^{-1}\left(\frac{1}{(1-\eta^2)^{n+1+\alpha}}\right),\|g\|_{\Phi,\alpha}^{lux}\}$$ where $C$ is (\ref{eq:pointwiseestimate}). We have at first that for any $f\in \mathcal{A}_\alpha^\Phi(\mathbb{B}^n)$,
\Beas
L &:=& \int_{\mathbb{B}^n}\Phi\left(\frac{(1-|z|^2)|\mathcal{R}T_gf(z)|}{K^2\|f\|_{\Phi,\alpha}^{lux}}\right)d\nu_\alpha(z)\\ &=& \int_{\mathbb{B}^n}\Phi\left(\frac{(1-|z|^2)|\mathcal{R}g(z)||f(z)|}{K^2\|f\|_{\Phi,\alpha}^{lux}}\right)d\nu_\alpha(z)\\ &\le& \int_{|z|\le \eta}\Phi\left(\frac{(1-|z|^2)|\mathcal{R}g(z)||f(z)|}{K^2\|f\|_{\Phi,\alpha}^{lux}}\right)d\nu_\alpha(z)+\\ & & \int_{|z|>\eta}\Phi\left(\frac{(1-|z|^2)|\mathcal{R}g(z)||f(z)|}{K^2\|f\|_{\Phi,\alpha}^{lux}}\right)d\nu_\alpha(z).
\Eeas
Using the pointwise estimate (\ref{eq:pointwiseestimate}) and the defintion of the constant $K$, we obtain
\Beas
L_1 &:=& \int_{|z|\le \eta}\Phi\left(\frac{(1-|z|^2)|\mathcal{R}g(z)||f(z)|}{K^2\|f\|_{\Phi,\alpha}^{lux}}\right)d\nu_\alpha(z)\\ &\le& \int_{|z|\le \eta}\Phi\left(\frac{(1-|z|^2)|\mathcal{R}g(z)|C\Phi^{-1}\left(\frac{1}{(1-|z|^2)^{n+1+\alpha}}\right)}{K^2}\right)d\nu_\alpha(z)\\ &\le& \int_{|z|\le \eta}\Phi\left(\frac{(1-|z|^2)|\mathcal{R}g(z)|C\Phi^{-1}\left(\frac{1}{(1-\eta^2)^{n+1+\alpha}}\right)}{K^2}\right)d\nu_\alpha(z).
\Eeas
It follows from the equivalent characterization in Theorem \ref{thm:main} that
\Beas
L_1 &\le& \int_{|z|\le \eta}\Phi\left(\frac{(1-|z|^2)|\mathcal{R}g(z)|}{\|g\|_{\Phi,\alpha}^{lux}}\right)d\nu_\alpha(z)\\ &\le& \int_{\mathbb{B}^n}\Phi\left(\frac{(1-|z|^2)|\mathcal{R}g(z)|}{\|g\|_{\Phi,\alpha}^{lux}}\right)d\nu_\alpha(z)\\ &\lesssim& \int_{\mathbb{B}^n}\Phi\left(\frac{|g(z)|}{\|g\|_{\Phi,\alpha}^{lux}}\right)d\nu_\alpha(z)\le 1.
\Eeas
%Putting \Be\label{eq:qPhi} q_\Phi= \left\{\begin{array}{lcr}1 & \mbox{if} & \Phi\in \mathscr{L}_p\\ q & \mbox{if} & \Phi\in \mathscr{U}^q,\end{array}\right.\Ee
%we obtain
%\Beas
%L_1 &\lesssim& \left[\Phi^{-1}\left(\frac{1}{(1-\eta^2)^{n+1+\alpha}}\right)\right]^{q_\Phi}\int_{\mathbb{B}^n}\Phi\left(\frac{(1-|z|^2)|\mathcal{R}g(z)|}{\|g\|_{\Phi,\alpha}^{lux}}\right)d\nu_\alpha(z)\\ &\lesssim& \left[\Phi^{-1}\left(\frac{1}{(1-\eta^2)^{n+1+\alpha}}\right)\right]^{q_\Phi}.
%\Eeas
That is
\Be\label{eq:L1}
\int_{|z|\le \eta}\Phi\left(\frac{(1-|z|^2)|\mathcal{R}g(z)||f(z)|}{K^2\|f\|_{\Phi,\alpha}^{lux}}\right)d\nu_\alpha(z)\lesssim 1.
\Ee
Using the estimate (\ref{eq:littleblochcut}), we obtain 
\Beas
L_2 &:=& \int_{|z|>\eta}\Phi\left(\frac{(1-|z|^2)|\mathcal{R}g(z)||f(z)|}{K^2\|f\|_{\Phi,\alpha}^{lux}}\right)d\nu_\alpha(z)\\ &\le& \int_{|z|>\eta}\Phi\left(\frac{\varepsilon |f(z)|}{K^2\|f\|_{\Phi,\alpha}^{lux}}\right)d\nu_\alpha(z)\\ &\le& \int_{\mathbb{B}^n}\Phi\left(\frac{|f(z)|}{\|f\|_{\Phi,\alpha}^{lux}}\right)d\nu_\alpha(z)\\ &\le& 1.
\Eeas
That is 
\Be\label{eq:L2}
\int_{|z|>\eta}\Phi\left(\frac{(1-|z|^2)|\mathcal{R}g(z)||f(z)|}{K^2\|f\|_{\Phi,\alpha}^{lux}}\right)d\nu_\alpha(z)\le 1.
\Ee
From (\ref{eq:L1}), (\ref{eq:L2}) and the equivalent characterizations in Theorem \ref{thm:main}, we deduce that $T_g$ is bounded on $\mathcal{A}_\alpha^\Phi(\mathbb{B}^n)$.
\vskip .2cm
Now, let $\{f_j\}_{j\in \mathbb{N}}$ be a sequence in the unit ball of $\mathcal{A}_\alpha^\Phi(\mathbb{B}^n)$ which converges to $0$ uniformly on compact subsets of $\mathbb{B}^n$. Then there exists an integer $j_0>0$ such that for any $j>j_0$,
$$\sup_{0<|z|\le \eta}|f_j(z)|<\varepsilon.$$
Let $M:=\max\{1,\|g\|_{\Phi,\alpha}^{lux}\}$. Then we obtain for any $j>j_0$, 
\Beas
L &:=& \int_{\mathbb{B}^n}\Phi\left(\frac{(1-|z|^2)|\mathcal{R}T_gf_j(z)|}{M}\right)d\nu_\alpha(z)\\ &=& \int_{\mathbb{B}^n}\Phi\left(\frac{(1-|z|^2)|\mathcal{R}g(z)||f_j(z)|}{M}\right)d\nu_\alpha(z)\\ &\le& \int_{|z|\le \eta}\Phi\left(\frac{(1-|z|^2)|\mathcal{R}g(z)||f_j(z)|}{M}\right)d\nu_\alpha(z)+\\ & & \int_{|z|>\eta}\Phi\left(\frac{(1-|z|^2)|\mathcal{R}g(z)||f_j(z)|}{M}\right)d\nu_\alpha(z).
\Eeas
Using the convexity of $\Phi$ if $\Phi\in \mathscr{U}^q$ and condition (\ref{eq:lowertype}) if $\Phi\in \mathscr{L}_p $ and the definition of $p_\Phi$  in (\ref{eq:pPhi}), it follows that
\Beas  L &\lesssim& \varepsilon^{p_\Phi}\int_{|z|\le \eta}\Phi\left(\frac{(1-|z|^2)|\mathcal{R}g(z)|}{M}\right)d\nu_\alpha(z)+\varepsilon^{p_\Phi}\int_{|z|>\eta}\Phi\left(\frac{|f_j(z)|}{M}\right)d\nu_\alpha(z)\\ &\lesssim& \varepsilon^{p_\Phi}\int_{\mathbb{B}^n}\Phi\left(\frac{|g(z)|}{M}\right)d\nu_\alpha(z)+\varepsilon^{p_\Phi}\int_{\mathbb{B}^n}\Phi\left(\frac{|f_j(z)|}{M}\right)d\nu_\alpha(z)\\ &\le& 2\varepsilon^{p_\Phi}.
\Eeas
It follows that $\int_{\mathbb{B}^n}\Phi\left(\frac{(1-|z|^2)|\mathcal{R}T_gf_j(z)|}{M}\right)d\nu_\alpha(z)\rightarrow 0$ as $j\rightarrow \infty$. Hence that $\int_{\mathbb{B}^n}\Phi\left(\frac{|T_gf_j(z)|}{M}\right)d\nu_\alpha(z)\rightarrow 0$ as $j\rightarrow \infty$. This implies that $\|T_gf_j\|_{\Phi,\alpha}^{lux}\rightarrow 0$ as $j\rightarrow \infty$. Thus $T_g$ is a compact operator on $\mathcal{A}_\alpha^\Phi(\mathbb{B}^n)$. The proof is complete.
\end{proof}

\bibliographystyle{plain}

\begin{thebibliography}{1}

\bibitem{Bekolle}
\textsc{B\'ekoll\'e, D.}: In\'egalit\'es \`a poids pour le projecteur de Bergman dans la boule unit\'e de $\mathbb C^n$. Studia Math. {\bf 71} (1981/82), no. 3, 305 � 323.

\bibitem{Berg}
\textsc{J. Bergh, J. L\"ofstr\"om}, Interpolation spaces, 1976.

%\bibitem{BGS}
%\textsc{Bonami, A., Grellier, S., Sehba, B. F.}: Boundedness
%of Hankel operators on $\mathcal H^1(\mathbb B^n)$. C. R. Math. Acad. Sci. Paris {\bf 344}, no. 12 (2007), 749 -- 752.

\bibitem{BL}
\textsc{Bonami, A., Luo, L.}: On Hankel operators between
Bergman spaces on the unit ball. Houston J. Math. Vol. {\bf 31},
no. 3  (2005), 815 -- 828.

%\bibitem{BM}
%\textsc{Bonami, A., Madan, S.}: Balayage of Carleson measures and Hankel operators on generalized Hardy spaces.
%Math. Nachr. {\bf 153}  (1991), 237 -- 245.


\bibitem{BS}
\textsc{Bonami, A.,  Sehba, B. F.}: Hankel operators between Hardy-Orlicz spaces and products of
holomorphic functions. Rev. Math. Arg. Vol. {\bf 50}, no. 2  (2009), 187 -- 199.

%\bibitem{Charpentier}
%\textsc{Charpentier, S.}: Composition operators on Bergman-Orlicz and Hardy-Orlicz spaces of $\mathbb B_N$, arXi:1006.5225v1.

\bibitem{Cobos}
\textsc{F. Cobos, J. Peetre and L. E. Persson}, On the connection between real and complex interpolation of quasi-Banach spaces. Bull. Sci. Math. {\bf 122}  (1998), 17--37.
\bibitem{CM}
\textsc{Cowen, C. C. and MacCluer, B. D.}, Composition operators on spaces of analytic functions. Studies in Advanced Mathematics, CRC Press, Boca Katon. FL., 1995.

\bibitem{DHZZ}
\textsc{Y. Deng, L. Huang, T. Zhao, D. Zheng}: Bergman projection and Bergman spaces, J. Oper. Theor. {\bf 46} (2001), 3-24.

\bibitem{Faragallah}
\textsc{M. Faragallah}, Interpolation of weighted Orlicz spaces. Appl. Math. Comput. {\bf 145} (2003), 613--622.

\bibitem{Gustavsson1}
\textsc{J. Gustavsson}, On interpolation of weighted $L^p-$spaces and Ovchinnikov's theorem. Studia Math. {\bf 72} (1982), no. 3, 237-–251.


\bibitem{Gustavsson}
\textsc{J. Gustavsson, J. Peetre}, Interpolation of Orlicz spaces. Studia Math. {\bf 60} (1977), 33-–59.

\bibitem{Hu1}
\textsc{Hu, Z. J.}: Extended Cesaro operators
on the Bloch space in the unit ball of Cn, Acta Math. Sci. Ser. B Engl.
Ed. {\bf 23} (4) (2003), 561--566.

\bibitem{Hu2}
\textsc{Hu, Z. J.}: Extended Ces\`aro operators on mixed norm spaces. Proc.  Amer. Math. Soc. {\bf 131} (7), 2171--2179 (2003).


\bibitem{Hu3}
\textsc{Hu, Z. J.}: Extended Cesaro operators
on Bergman spaces, J. Math. Anal. Appl. {\bf 296} (2004), 435--454.

%\bibitem{FK}
%\textsc{Fiorenza, A., Krbec, M.}: Indices of Orlicz spaces and some %applications. Comment. Math. Univ. Carolinae {\bf 38}, 3 (1997), 433-451.

%\bibitem{Janson1}
%\textsc{Janson, S.}: On functions with conditions on the mean oscillation. {\em Ark.
%Mat.} {\bf 14}, no. 2 (1976), 189 -- 196.


%\bibitem{Janson} Janson, S.: Generalizations of Lipschitz spaces and
%an application to Hardy spaces and bounded mean oscillation.
%{\em Duke Math. J.} {\bf  47}, no. 4 (1980), 959 -- 982.

%\bibitem{JPR}
%\textsc{Janson, S., Peetre, P., Rochberg, R.}: Hankel forms
%and the Fock space. Revista Mat. Ibero-Amer. {\bf 3} (1987)
%61 -- 138.

%\bibitem{JPS}
%\textsc{Janson, S., Peetre, P., Semmes, S.}: On action of Hankel and Toeplitz operators on some function spaces. Duke Math. J. {\bf 51} (4) (1987), 937-958.
%and the Fock space. Revista Mat. Ibero-Amer. {\bf 3} (1987)
%61 -- 138.

%\bibitem{JX}
%\textsc{Jarchow, H. and Xiao J.}:  Composition operators between Nevanlinna classes and Bergman spaces with weights. J. Operator Theory
%{\bf 46} (2001), no. 3, suppl., 605�618.

\bibitem{Kraynek}
\textsc{W. T. Kraynek}, Interpolation of sublinear operators in generalized Orlicz and Hardy-Orlicz spaces. Studi Math. {\bf 43} (1972), 93--123.

\bibitem{LLQR}
\textsc{Lef\`evre, P.,Li, D., Queff\'ellec, H. and Rodriguez-Piazza, L.}: Composition operators on Hardy-orlicz spaces, Memoirs of the AMS, {\bf 207}, no. 974 (2010).

\bibitem{LiStevic}
\textsc{Li, S, Stevic\', S}: Riemann-Stieltjes operators between different Bergman spaces. Bull. Belg. Math. Soc. Simon Stevin {\bf 15} (4) (2008), 677--686.


\bibitem{PauZhao2}
\textsc{J. Pau, R. Zhao}, Weak factorization and Hankel form for weighted Bergman spaces on the unit ball. Available as http://arxiv.org/abs/1407.4632v1.
%\bibitem{LU1}
%\textsc{Luo, L. and Ueki, S.}: Compact weighted composition operators and Multiplication operators between Hardy spaces. Abs. Appl. Anal.

%\bibitem{LU2}
%\textsc{Luo, L. and Ueki, S.}: Weighted composition operators between weighted Bergman and Hardy spaces on the unit ball of $\mathbb C^n$, J. Math. Anal. Appl. {\bf 326} (2007), 88-100.

%\bibitem{MacCluer}
%\textsc{MacCluer, B. D.}: Compact composition operators on $H^p(B_N)$. Michigan Math. J. {\bf 32} (1985), no. 2, 237--248.

%\bibitem{MacMer}
%\textsc{MacCluer, B. D., Mercer, P. R. }: Composition operators between Hardy and weighted Bergman spaces on convex domains in $C^N$. Proc. Amer. Math. Soc. {\bf 123} (1995), no. 7, 2093--2102

%\bibitem{PavR}
%\textsc{Pavlovic, M. and Riihentaus, J.}: Classes of quasi-nearly subharmonic functions. Potential Anal. {\bf 29} (2008), no. 1, 89�104.

%\bibitem{Power}
%\textsc{Power, S. C.}: H\"ormander's Carleson theorem for the ball, Glasgow Math. J., {\bf 26}, no. 1, 13-17, 1985.

%\bibitem{MO}
%\textsc{Matuszewska, W., Orlicz, W.}: On certain properties of $\Phi$ %functions. Bull. Acad. Polon. Sci. {bf 8} (1960), 439-443.
%and the Fock space. Revista Mat. Ibero-Amer. {\bf 3} (1987)
%61 -- 138.

\bibitem{Rao}
\textsc{M. M. Rao}, Interpolation, ergodicity and martingales. J. Math. Mech. {\bf 16} (1966), 543--567.

\bibitem{RR}
\textsc{M. M. Rao, Z. D. Ren,} Theory of Orlicz functions, Pure and Applied Mathematics {\bf 146}, Marcel Dekker, Inc. (1991).

%\bibitem{RenTu}
%\textsc{Ren, G. and Tu, C.}: Bloch space in the unit ball of $\mathbb C^n$, Proc. Amer. Math. Soc., {\bf 133}, no. 3, 719-726, 2004.

\bibitem{R}
\textsc{W. Rudin,} Function theory in the unit ball of
$\C^n$. Grundlehren der Mathematischen Wissenschaften [Fundamental Principles of Mathematical Science], 241.
Springer-Verlag, New York-Berlin (1980).

\bibitem{sehbastevic}
\textsc{B. F. Sehba, S. Stevic,} On some product-type operators from
Hardy-Orlicz and Bergman-Orlicz spaces to weighted-type spaces,
Appl. Math. Comput. (2014)

\bibitem{ST}
\textsc{B. F. Sehba, E. Tchoundja,} Hankel operators with weighted Lipschitz symbols in the unit ball,  Math. Scand. {\bf 112}, (2013)  no. 2, 258-- 274.

\bibitem{ST1}
\textsc{B. F. Sehba, E. Tchoundja, } Hankel operators between holomorphic Hardy-Orlicz spaces,  Integral Equations Operator Theory {\bf 73} (2012), no. 3, 331--349.

\bibitem{ST2}
\textsc{B. F. Sehba, E. Tchoundja,} Duality for large Bergman-Orlicz spaces and bounded of Hankel operators.
%$\rho$-Carleson measures. Acta Sci. Math. (Szeged) {\bf 75} (2009), 499 -- 525.

%\bibitem{WS}
%\textsc{Smith, W. S.}: $\textrm{BMO}(\rho)$ and Carleson
%measures. Trans. amer. Math. Soc., V. {\bf 287}, no. 1 (1985) 107 -- 126.

%\bibitem{T}
%\textsc{Tolokonnikov, V. A.}: Hankel and Toeplitz operators
%in Hardy spaces. Soviet Math. {\bf 37} (1987), 1359 -- 1364.

\bibitem{V}
\textsc{Viviani, B. E.}: An atomic decomposition of the predual of $BMO(\rho)$.
Rev. Mat. Iberoamericana {\bf 3} (1987), no. 3-4, 401 -- 425.

\bibitem{VT}
\textsc{Volberg, A. L. and Tolokonnikov, V. A.}: Hankel operators and problems of best approximation of unbounded functions.
Translated from Zapiski Nauchnykh Seminarov Leningradskogo Matematicheskogo Instituta im. V.A. Steklova AN SSSR,  {\bf 141} (1985), 5 -- 17.

%\bibitem{Yang}
%\textsc{ Yang, W. and Meng, X.}: Generalized composition operators from $F(p,q,s)$ spaces to Bloch-type spaces. Appl. Math. Comput. {\bf 217} (2010), no. 6, 2513--2519.
\bibitem{Xiao}
\textsc{Xiao, J. }, Riemann-Stieltjes operators on weighted Bloch and Bergman spaces of the unit ball. J. London Math. Soc. (2) {\bf 70} (2004),no. 1, 199--214.

\bibitem{ZZ}
\textsc{Zhao, R. and Zhu, K.}: Theory of Bergman spaces in the unit ball of $\Bbb C^n$Cn. Mem. Soc. Math. Fr. (N.S.) No. {\bf 115} (2008), vi+103 pp. (2009). ISBN: 978-2-85629-267-9.

%\bibitem{ZhuX}
%\textsc{Zhu, X.}: Weighted composition operators from $F(p,q,s)$ spaces to $H^\infty_\mu$ spaces. Abstr. Appl. Anal. 2009, Art. ID 290978, 14 pp.

\bibitem{KZ} \textsc{Zhu, K.}:
\newblock{Spaces of holomorphic functions in the unit ball.} Graduate Texts in Mathematics 226, Springer  Verlag (2004).

\end{thebibliography}

\end{document}